\pgfplotsset{compat=newest}
\newtheorem{prop}{Proposition}
\newtheorem{theorem}{Theorem}
\newtheorem{cor}{Corollary}
\theoremstyle{definition}
\newtheorem{definition}{Definition}
\title{\bf Sequence of Polyhedral Relaxations \\ for Nonlinear Univariate Functions}
\author[1]{Kaarthik Sundar}
\affil[1]{\textit{Los Alamos National Laboratory, New Mexico, USA}, \texttt{\{kaarthik,harsha\}@lanl.gov}}
\author[2]{Sujeevraja Sanjeevi}
\affil[]{\textit{Sabre, Southlake, Texas, USA}, \texttt{sujeev.sanjeevi@gmail.com}}
\author[1]{Harsha Nagarajan}
\providecommand{\keywords}[1]{\textbf{\textit{Index terms---}} #1}
\date{}
\begin{document}
\maketitle 
\begin{abstract}
    Given a nonlinear, univariate, bounded, and differentiable function $f(x)$, this article develops a sequence of Mixed Integer Linear Programming (MILP) and Linear Programming (LP) relaxations that converge to the graph of $f(x)$ and its convex hull, respectively. Theoretical convergence of the sequence of relaxations to the graph of the function and its convex hull is established. For nonlinear non-convex optimization problems, the relaxations presented in this article can be used to construct tight MILP and LP relaxations. These MILP and the LP relaxations can also be used with MILP-based and spatial branch-and-bound based global optimization algorithms, respectively.

    \noindent \keywords{piecewise relaxation; univariate function; global optimization; convex hull; continuous relaxation}
\end{abstract}

\section{Introduction} \label{sec:intro}
Developing convex relaxations of non-convex functions is a core part of any global optimization algorithm for nonlinear, non-convex optimization problems \cite{Tawarmalani2002}. Traditional global optimization algorithms rely on convex relaxations of non-convex functions to construct a relaxation of the optimization problem and obtain bounds on the optimal objective value with very little computational effort. These relaxations are then embedded into a spatial branch-and-bound (sB\&B) framework to obtain a global optimal solution \cite{Smith1997,Pardalos2013,Floudas2014}. The past decade has seen the advent of MILP-based global optimization algorithms \cite{Wicaksono2008,Misener2011,Teles2013,Castillo2018,Nagarajan2019,Nagarajan2016,Dambrosio2012} that solve a sequence of MILPs or convex MINLPs based on successively tighter piecewise convex relaxations to compute the optimal solution. The main motivation of this approach is the meteoric improvement in the speed of off-the-shelf MILP solvers \cite{GurobiBenchmarks}. Both sB\&B-based and MILP-based global optimization algorithms have one core sub-problem each. The sB\&B-based algorithms rely on constructing and solving convex relaxations of non-convex structures and MILP-based algorithms rely on solution to piecewise convex relaxations to non-convex structures. Though commercial sB\&B-based solvers like BARON \cite{baron} are fairly mature, they still lack support for a wide variety of functions like trigonometric, hyperbolic etc. Similarly, given the nascency of MILP-based global optimization methods \cite{Misener2011,Castro2016,Nagarajan2019}, their support for non-convex structures is even lesser. In particular, \cite{Misener2011,Castro2016,Nagarajan2019} have extensively looked at polynomial optimization problems and developed many piecewise convex relaxations for a single multilinear term. For other polynomial terms, these papers have relied on piecewise convex relaxations of the equation $y = x^2$ and the idea of factorable functions to build piecewise relaxations of other (higher-order) non-convex polynomial terms. 

Before we present literature related to this work, we illustrate the importance of developing polyhedral and piecewise polyhedral relaxations for univariate functions in global optimization algorithms. To this end, we consider the $1730$ NLP instances in MINLPLib. A total of $20$ nonlinear functions (see Table 3 in \url{https://www.gams.com/latest/docs/UG_Parameters.html} for their definitions) occur in these instances. The number of instances that contain each of these functions are shown in Fig. \ref{fig:operators}. Among the $20$ functions, all but `mul', `div', `rpower', `centropy', `mod', and `min' are unary nonlinear functions i.e. univariate functions.   
\begin{figure}
    \centering
    \includegraphics[scale=0.9]{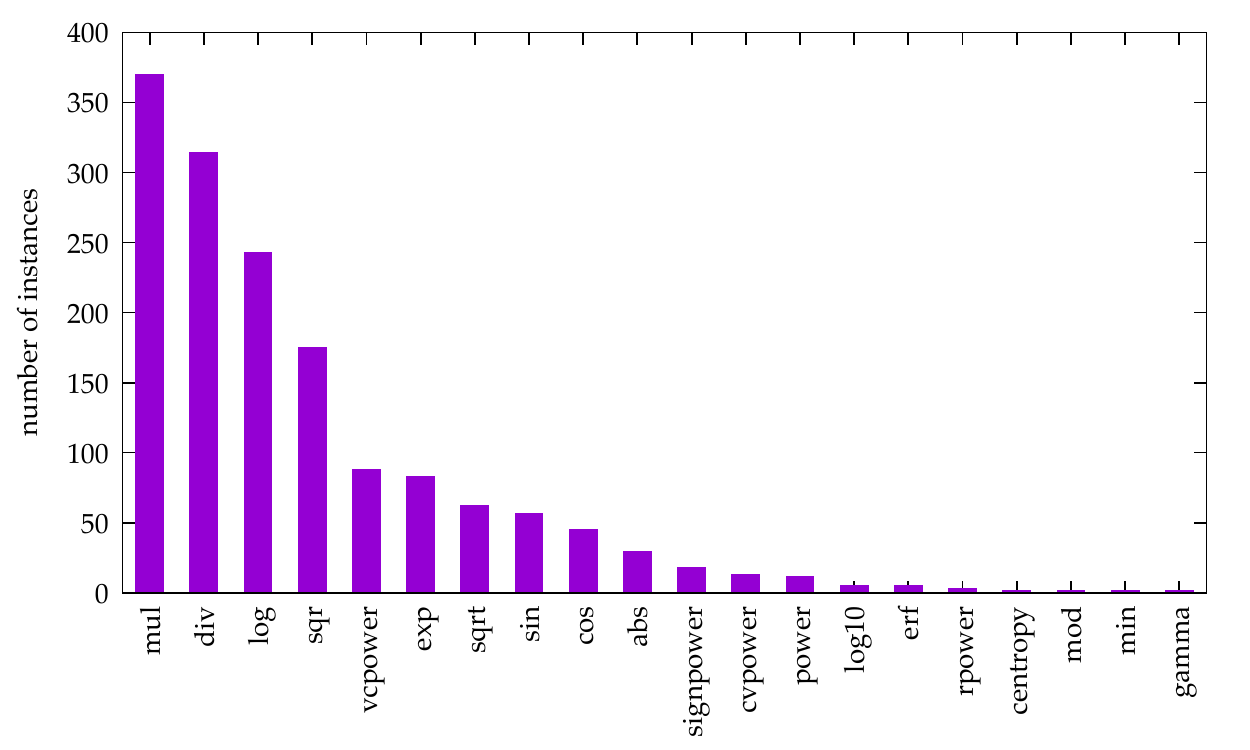}
    \caption{Number of instances in MINLPLib where each nonlinear function occurs at least once.}
    \label{fig:operators}
\end{figure}
Given any factorable MINLP, i.e. each nonlinear function in the MINLP is a factorable function, the MINLP can be equivalently reformulated to the following problem via introduction of sufficient number of artificial variables:
\begin{subequations}
\begin{flalign}
    \min: \quad & \bm c \cdot \bm x  + \bm d \cdot \bm y \label{eq:obj-minlp} \\
    \text{subject to:} \quad & A_1 \bm x + A_2 \bm y \leqslant \bm {b},  \label{eq:linear-minlp} \\ 
    & y_i = f_i(\bm x) \quad \forall f_i \in \text{ univariate functions}, \label{eq:univariate-minlp}  \\
    & y_j = g_j(\bm x) \quad \forall g_j \in \text{ multilinear functions}, \label{eq:multi-minlp} \\ 
    & \bm{l} \leqslant \bm{x} \leqslant \bm{u}, \label{eq:bounds-minlp} \\ 
    & \bm{x} \in \mathbb R^{n_1} \times \mathbb Z^{n_2} \times \{0, 1\}^{n_3}. \label{eq:type-minlp}
\end{flalign}
\label{eq:minlp}
\end{subequations}
To solve the MINLP in Eq. \eqref{eq:minlp} to global optimality using a sB\&B-based method, a convex relaxation is required for each Eq. in \eqref{eq:univariate-minlp} and \eqref{eq:multi-minlp}. On the other hand, for a MILP-based method, a piecewise convex relaxation for each Eq. in \eqref{eq:univariate-minlp} and \eqref{eq:multi-minlp}, given partitions for a subset of variables in the decision variable vector $\bm{x}$, is required \cite{Misener2011,Castro2016,Nagarajan2019}. In the literature, authors in \cite{Castro2016, Nagarajan2019, Sundar2020} have developed many piecewise convex relaxations for multilinear functions and $y = x^2$ i.e., `mul', `sqr', and `power'. This has enabled very efficient MILP-based global solution methods that are comparable to state-of-the-art spatial branch-and-bound solvers like BARON to certain classes of problems like polynomial optimization problems \cite{Nagarajan2019}. Extending these methods to other problems in the MINLPLib necessitates development of piecewise relaxations for the other non-convex functions shown in Fig. \ref{fig:operators}. All functions other than the ones mentioned above lack piecewise convex relaxations. On the other hand, developing convex relaxations for general univariate functions would aid sB\&B-based solvers like BARON to support a wider family of nonlinear non-convex functions. This article focuses on nonlinear univariate functions that are bounded and differentiable in their domains, and develops both polyhedral relaxations and piecewise polyhedral relaxations for these functions that can directly {be integrated into sB\&B-based and MILP-based global optimization approaches, respectively.} This class of functions is vast, and subsumes $14$ of the $20$ nonlinear functions in Fig. \ref{fig:operators}. Furthermore, `abs' and `min' which do not fall under the category of functions in this article, have disjunctive and convex reformulations that can directly be integrated into existing global methods. In summary, given a nonlinear, bounded, and differentiable univariate function $y = f(x)$ with a bounded domain $[\ell, u]$, the contributions of this article are as follows: (i) we develop a sequence of MILP and LP relaxations that converge to the graph of $f(x)$ and its convex hull, respectively. (ii) For a given partition of the domain, we construct an MILP and an LP relaxation that can be directly utilized in MILP-based and sB\&B-based global optimization algorithms. 

The idea of developing polyhedral and piecewise polyhedral relaxations for nonlinear, univariate functions is not new and has been addressed in the literature \cite{Gounaris2008,Liberti2003,Kallrath2014,Rebennack2016}. The difference between a polyhedral relaxation and a piecewise polyhedral relaxation is that for the latter, the domain of the function is partitioned into sub-domains, a polyhedral relaxation is constructed for each sub-domain and a disjunctive formulation to capture the relaxation for the whole domain is obtained. A polyhedral relaxation, on the other hand, constructs a polyhedron that contains the entire function in its domain. Hence, by definition, a piecewise polyhedral relaxation is an MILP. The authors in \cite{Gounaris2008} develop a series of tight convex $\alpha$-BB under-estimators for univariate functions in $\mathcal C^2$ and prove that it converges to the convex hull on one side of the function with increasing partition density. In \cite{Liberti2003}, the authors focus on developing convex envelopes for monomial terms with odd-degree. Authors in \cite{Kallrath2014} develop piecewise relaxations for univariate functions by solving multiple non-convex MINLPs to global optimality. Finally, in \cite{Rebennack2016}, the author develops piecewise over- and under-estimators for the function $x^2$, and these estimators are essentially the same as the polyhedral relaxations developed in this article. Our work differs from the aforementioned works in the sense that (i) it develops a single framework for all nonlinear, univariate, bounded, and differentiable functions (i.e., $\mathcal C^1$) which provides both MILP (piecewise polyhedral) and LP (polyhedral) relaxations for a fixed partition of the domain. (ii) It has theoretical convergence properties for both the LP and the MILP relaxations with increasing partition density, similar to \cite{Gounaris2008}. The theoretical convergence of the LP relaxations to the convex hull is a novel result and, to the best of our knowledge, is the first result that takes a step in the direction of characterizing the convex hull of a nonlinear, non-convex, bounded, and differentiable univariate functions. (iii) It is valid for a wider class of univariate, nonlinear functions unlike \cite{Liberti2003,Rebennack2016}. (iv) It can easily be integrated into state-of-the-art global optimization approaches, be it sB\&B-based or MILP-based. 

The remainder of the article is organized as follows. Sec. \ref{sec:statement} introduces the formal problem statement and notations. Sec. \ref{sec:geometry} introduces the main intuition behind the relaxations geometrically and presents the formulations. Sec. \ref{sec:sequence} presents the theoretical properties of the relaxations. Finally, Sec. \ref{sec:results} presents the computational results with conclusions in Section \ref{sec:conclusion}.

\section{Problem statement} \label{sec:statement}
We are given a nonlinear, univariate function $f: [\ell, u] \rightarrow \mathbb R$ that is bounded and differentiable. The objective of the article is to develop a sequence of MILP relaxations converging to the graph of $f(x)$ i.e., $\mathcal X = \{ (x, y) \in [\ell, u] \times \mathbb{R} : y = f(x) \}$, with corresponding LP relaxations converging to the convex hull of $\mathcal{X}$, $\operatorname{conv}(\mathcal X)$. Throughout the rest of the article, we always assume the function under investigation is nonlinear, univariate, bounded, and differentiable. We first introduce some definitions and notations that will be used throughout the rest of the article. 
\begin{definition} \label{def:partition} \textit{Partition of a closed interval} --
 Given a closed interval $[\ell, u] \subset \mathbb{R}$, a partition $p$ of $[\ell, u]$ is an ordered sequence of real numbers $(x_0,x_1,\ldots,x_k)$ such that $\ell=x_0<x_1<\ldots<x_k=u$. We denote the set of partition points of $p$ by $\mathcal P(p)$, i.e., $\mathcal P(p) = \{x_0, \dots, x_k\}$, and the set of all possible partitions of $[\ell, u]$ by $\mathcal A[\ell, u]$. Given $p \in \mathcal A[\ell, u]$, each closed interval $[x_i, x_{i+1}]$ is referred to as a sub-interval of $p$, and we let $\mathscr S(\cdot)$ be the function mapping $p$ to its set of sub-intervals.
\end{definition}
\begin{definition} \label{def:break-point} \textit{Break point of a function} -- Given a nonlinear, bounded, and differentiable function $f: [\ell, u] \rightarrow \mathbb R$, $b \in  [\ell, u]$ is referred to as a break point of the function, if at $b$, the function $f$ changes from being convex to concave or vice versa. 
\end{definition}
\begin{definition} \label{def:base-partition} \textit{Base partition of a function} --
Given a function $f: [\ell, u] \rightarrow \mathbb R$, the partition $p^0_f \in \mathcal A[\ell, u]$ is referred to as the base partition of $f$ if it satisfies: (i) every break point of $f$ in $[\ell, u]$ is in $\mathcal{P}(p)$, (ii) for any sub-interval $[x_i, x_{i+1}] \in \mathscr S (p^0_f)$, $f'(x_i) \neq f'(x_{i+1})$, and (iii) $|\mathcal P(p^0_f)|$ is minimum.
\end{definition}
\noindent The above definition makes $\mathcal P(p^0_f)$ non-unique due to condition (ii). To make it unique, without loss of generality, we construct $p^0_f$ with a partition that includes only the break points of $f$ in its domain, and add a partition point at the mid-point of every sub-interval $[x_i, x_{i+1}]$, if $f'(x_i) = f'(x_{i+1})$. The inclusion of break points in the definition of $p^0_f$ ensures that in each sub-interval of $p^0_f$, the function $f$ is either concave or convex. 
\begin{definition} \label{def:refinement} \textit{Refinement of a partition} --
Given two partitions $p, q \in \mathcal A[\ell, u]$, $q$ is defined to be the refinement of $p$, if $\mathcal P(p) \subset \mathcal P(q)$. 
\end{definition}
\noindent Any function that refines a partition is also referred to as a refinement scheme. In this article, we will present all the properties of the forthcoming polyhedral relaxations of $f$ with respect to the interval-bisection refinement scheme defined below. 
\begin{definition} \label{def:interval-bisection} \textit{Interval bisection} --
Given any partition $p \in \mathcal A[\ell, u]$, a refinement scheme $\mathscr B: \mathcal A[\ell, u] \rightarrow \mathcal A[\ell, u]$ is referred to as interval bisection if is satisfies the following properties: (i) $\mathcal P(p) \subset \mathcal P(\mathscr B(p))$, $|\mathcal P(\mathscr B(p))| = |\mathcal P(p)| + 1$  and (ii) the additional partition point in $\mathcal P(\mathscr B(p)) \setminus \mathcal{P}(p)$ is a mid-point of some sub-interval in $p$ i.e., $\mathscr S(p)$.
\end{definition}
\begin{definition} \label{def:admissible-partitions} \textit{Admissible partitions for a function} -- 
Given $f: [\ell, u] \rightarrow \mathbb R$ and a partition $p \in \mathcal A[\ell, u]$, $p$ is defined to be admissible to $f$ if $p = p^0_f$, or if $p$ is obtained by applying a sequence of interval bisections on $p_f^0$. The set of all admissible partitions of $f$ is denoted by $\mathcal A_f[\ell, u]$. 
\end{definition}
Next, we present a systematic way to construct base MILP and LP relaxations for a function $f: [\ell, u] \rightarrow \mathbb R$ and an admissible partition $p \in \mathcal A_f[\ell, u]$. 

\section{Geometry of polyhedral relaxations} \label{sec:geometry}
An MILP relaxation for the function $f: [\ell, u] \rightarrow \mathbb R$ is constructed using an admissible partition $p \in \mathcal A_f[\ell, u]$. The relaxation constructed using $p_f^0$ is referred to as the base MILP relaxation of $f$. The core idea of the relaxation is: if $f$ is convex or concave in any closed interval, construct a triangle whose one side is the secant of $f$ through endpoints of the interval, and the other two sides are the tangents of $f$ at the endpoints; the region bounded by this triangle is a trivial relaxation of $f$ in that closed interval. Given $p_f^0$, the base MILP relaxation is obtained as the disjunctive union of a chain of triangles, one for each sub-interval in $\mathscr S(p_f^0)$. This construction is guaranteed to result in a relaxation of $f$ since the definition \ref{def:base-partition} ensures that $f$ is either concave or convex in each sub-interval. It is also guaranteed to be a relaxation for any admissible partition $p \in \mathcal A_f[\ell, u] \setminus \{p_f^0\}$ since $p$ has to necessarily be some refinement of $p_f^0$. Given a sub-interval $[x_i, x_{i+1}] \in \mathscr S(p_f^0)$, the equations of the two tangents and the secant represented as a piecewise linear function and linear equation, respectively, are as follows:
\begin{subequations}
\begin{flalign}
h(x_i) &= f(x_i) + f'(x_i) \cdot (x - x_i) \label{eq:tangent-1} \\
t_{i+1}(x) &= 
\begin{cases}
    \max ~ \{ h(x_i), h(x_{i+1}) \} & \text{ if $f(x)$ is convex in $[x_i, x_{i+1}]$ } \\ 
    \min ~ \{ h(x_i), h(x_{i+1}) \} & \text{ if $f(x)$ is concave in $[x_i, x_{i+1}]$ } \\ 
\end{cases} \label{eq:tangent-plf}\\
s_{i+1}(x) &= f(x_i) + \frac{f(x_{i+1}) - f(x_i)}{x_{i+1}-x_i} \cdot (x - x_i) \label{eq:secant}
\end{flalign}
\label{eq:bounding_f}
\end{subequations}
The Eq. \eqref{eq:tangent-1} is the equation of the tangent of the function $f$ at $x_i$. When $f$ is convex in the sub-interval $[x_i, x_{i+1}]$, $t_{i+1}(x)$ (resp. $s_{i+1}(x)$) is the lower (resp. upper) bounding function and when $f$ is concave in the sub-interval, $t_{i+1}(x)$ (resp. $s_{i+1}(x)$) becomes the upper (resp. lower) bounding function. The base relaxation obtained for the function $y=x^3$ in $[-1.5, 2]$ is shown in Fig. \ref{fig:x3-relaxation}. The function is convex in $[0, 2]$ and concave in $[-1.5, 2]$. Hence, in the sub-intervals $[0, 2]$ and  $[-1.5, 2]$ the sides of the triangle formed by the tangents form the lower and upper bounding functions, respectively. 
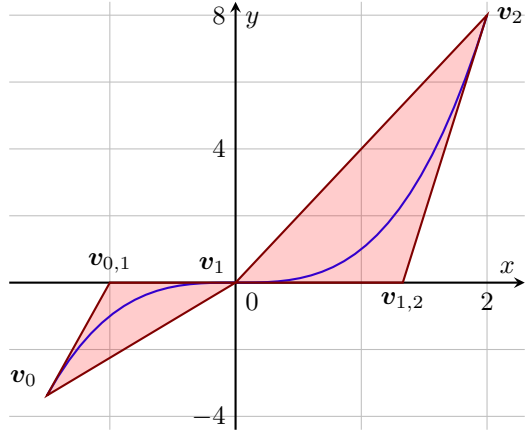
\begin{figure}[htbp]
\centering
\begin{tikzpicture}
\begin{axis}[ xlabel={$x$}, ylabel={$y$}
  ,axis lines=middle
  ,xmin=-1.8, xmax=2.3, ymin=-4.4, ymax=8.4
  ,samples=41, grid, thick
  ,domain=-1.5:2
  ,legend pos=outer north east
  ,xtick={}
  ,ytick={}
  ,ticks=none
  ]
  \addplot+[no marks] {x^3};
  \addplot [red!50!black,fill=red,fill opacity=0.2]
  coordinates 
  {(0, 0) (2, 8) (1.33, 0.0)} -- cycle;
  \addplot [red!50!black,fill=red,fill opacity=0.2]
  coordinates 
  {(-1.5, -3.375) (0, 0) (-1, 0.0)} -- cycle;
  \node [above left] at (-1.5, -3.375) {$\bm v_0$};
  \node [above left] at (0, 0) {$\bm v_1$};
  \node [right] at (2, 8) {$\bm v_2$};
  \node [above] at (-1, 0.0) {$\bm v_{0, 1}$};
  \node [below] at (1.33, 0.0) {$\bm v_{1, 2}$};
  \node [below right] at (0.0, 0.0) {$0$};
  \node [below] at (2, 0) {$2$};
  \node [left] at (0, 4) {$4$};
  \node [left] at (0, 8) {$8$};
  \node [left] at (0, -4) {$-4$};
 \end{axis}
\end{tikzpicture}
\caption{Base relaxation for $y=x^3$ for $x \in [-1.5, 2]$. The coordinates of the vertices are $\bm v_0 = (-1.5, -3.375)$,  $\bm v_1 = (0, 0)$,  $\bm v_2 = (2, 8)$,  $\bm v_{0, 1} = (-1, 0)$,  $\bm v_{1, 2} = (1.33, 0)$, and $p_f^0 = (-1.5, 0, 2)$. }
\label{fig:x3-relaxation}
\end{figure}
In the example shown in Fig. \ref{fig:x3-relaxation}, vertices $\bm v_0$, $\bm v_1$, and $\bm v_2$ are points on $y= f(x)$ and obtained using the base partition $p_f^0$. The vertex $\bm v_{i, i+1}$ for $i \in \{0, 1\}$ (in Fig. \ref{fig:x3-relaxation}) is obtained as the intersection of tangents at $\bm v_i$ and $\bm v_{i+1}$. Extrapolating this notation, given $f$ and $p \in \mathcal A_f[\ell, u]$, the vertices $\bm v_0, \bm v_1, \dots, \bm v_k$ ($k = |\mathcal P(p)|-1$) are points on the curve $y = f(x)$ at partition points of $p$ and the vertex $\bm v_{i, i+1}$ for $i \in \{0, \dots, k-1\}$ is the intersection of the two tangents to the curve at $\bm v_i$ and $\bm v_{i+1}$. The MILP relaxation for $\mathcal X$, denoted by $\mathcal R(f, p)$, is then given by the disjunctive union of a chain of $k$ triangles where the vertices of triangle $i$ are $\{\bm v_{i-1}, \bm v_i, \bm v_{i-1, i}\}$. We now define the strength of the relaxation $\mathcal R(f, p)$, given $f$ and $p$ as follows:
\begin{definition} \label{def:strength} \textit{Strength of $\mathcal R(f, p)$} --
Given $f: [\ell, u] \rightarrow \mathbb R$, $p \in \mathcal A_f[\ell, u]$ and the corresponding MILP relaxation $\mathcal R(f, p)$, the strength of the relaxation denoted by $\varepsilon$ is given by 
\begin{flalign}
\varepsilon(\mathcal R(f, p)) = \max_{i \in \{1, \dots k\}}  \max_{x \in [x_{i-1}, x_i]}  \left|t_i(x) - s_i(x)\right|  \label{eq:relaxation-strength}
\end{flalign}
where $k = |\mathcal P(p)| - 1$, and $t_i(x)$ and $s_i(x)$ are given by Eq. \eqref{eq:bounding_f}. 
\end{definition}
\noindent Informally, the relaxation strength $\varepsilon(\mathcal R(f, p))$ is defined as the maximum vertical distance over the disjunctive union of the $k$ triangles formed by secants and tangents at each sub-interval defined by the admissible partition $p$. We remark that the strength of the relaxation is always non-negative; lower the value of $\varepsilon(\mathcal R(f, p))$, the tighter the relaxation. Next, we formulate the disjunctive union of the chain of $k$ triangles as a MILP.

\paragraph{A formulation for $\mathcal R(f, p)$:} Again, we are given $f: [\ell, u] \rightarrow \mathbb R$ and an admissible partition $p \in \mathcal A_f[\ell, u]$. The number of triangles in $\mathcal R(f, p)$ is given by $k = |\mathcal P(p)| - 1$. Each triangle $i$ is associated with a binary variable $z_i$ and two non-negative continuous variables $\delta_1^i,\delta_2^i$. Given any vertex $\bm v$, we let $v^x$ and $v^y$ represent the $x$- and $y$-coordinate of $\bm v$. With this notation, we write the MILP formulation of the disjunctive union of triangles as: 
\begin{subequations}
\begin{flalign}
& y = v_0^y + \sum_{i = 1}^k \left\{ \delta_1^i(v_{i-1, i}^y - v_{i-1}^y) + \delta_2^i(v_{i}^y - v_{i-1}^y) \right\} \label{eq:y-triangle} \\
& x = v_0^x + \sum_{i = 1}^k \left\{ \delta_1^i(v_{i-1, i}^x - v_{i-1}^x) + \delta_2^i(v_{i}^x - v_{i-1}^x) \right\} \label{eq:x-triangle} \\
& \delta_1^1 + \delta_2^1 \leqslant 1 \label{eq:delta_bound} \\ 
& \delta_1^i + \delta_2^i \leqslant z_{i-1} \leqslant \delta_2^{i-1} \quad \forall i \in \{2, \dots, k\} \label{eq:fill-constraints} \\
& 0 \leqslant \delta_1^i, \delta_2^i \leqslant 1 \quad \forall i \in \{1, \dots, k\} \label{eq:cont} \\
& z_{i} \in \{0, 1\} \quad \forall i \in \{1, \dots, k-1\} \label{eq:bin}
\end{flalign}
\label{eq:triangle-chain}
\end{subequations}
The formulation \eqref{eq:triangle-chain} is referred to as the ``incremental formulation'' \cite{Yildiz2013} since the portion of each triangle is filled by the variables $x$ and $y$. The triangle $i+1$ can be filled only if the triangle $i$ is already filled i.e. when $\delta_2^i = 1$. While it is possible to develop other formulations to capture the disjunctive union of triangles with the same theoretical properties as the one in Eq. \eqref{eq:triangle-chain}, we refrain from doing so as that is not the focus of this article (see \cite{Yildiz2013} for details). We also remark that the MILP formulation for $\mathcal R(f, p)$ is in the variable space $(x, y, \bm \delta, \bm z)$ where $\bm \delta$ and $\bm z$ are vectors of additional continuous and binary variables in Eq. \eqref{eq:triangle-chain}. We will use the notation $\operatorname{Proj}_{x, y} \mathcal R(f, p)$ to denote the projection of $\mathcal R(f, p)$ to the $(x, y)$-variable space. It is not difficult to prove that $\operatorname{Proj}_{x, y} \mathcal R(f, p)$ is actually the disjunctive union of the $k$ triangles in the $(x, y)$ space.

\section{Sequence of relaxations and theoretical properties} \label{sec:sequence}
Before we present the sequence of polyhedral (MILP) relaxations for the function $f$, we invoke some results from literature that will be used to prove their theoretical properties. 
\begin{prop} (Lemma 2 in \cite{Thakur1980}) \label{prop:thakur}
Given the function $f:[\ell, u] \rightarrow \mathbb R$, a sub-interval $[a,b]$ of $[\ell, u]$ where $f$ is convex or concave, and the partition that consists of the points $a$ and $b$, i.e., $p = (a, b) \in \mathcal A[a, b]$, the strength of the relaxation $\mathcal R(f, p)$ is bounded above according to the following inequality:
\begin{flalign}
\varepsilon(\mathcal R(f, p) \leqslant \frac{ (b-a) |f'(a) - f'(b)|}4 \label{eq:triangle-error-bound}
\end{flalign}
\end{prop}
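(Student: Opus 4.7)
The plan is to work directly with the two bounding functions in a single sub-interval, exploit the fact that their vertical gap is piecewise linear with zeros at the endpoints, and then bound the unique interior peak using an AM--GM style inequality.

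First I would reduce to the convex case by symmetry: if $f$ is concave on $[a,b]$, then $-f$ is convex and the tangent/secant roles of $t_1$ and $s_1$ simply swap, giving the same absolute gap. So assume $f$ is convex on $[a,b]$. Then monotonicity of $f'$ yields $f'(a)\leqslant m \leqslant f'(b)$, where $m = (f(b)-f(a))/(b-a)$ is the secant slope. Writing the two tangent lines $h_a(x) = f(a) + f'(a)(x-a)$ and $h_b(x) = f(b) + f'(b)(x-b)$, I would note that $h_a(a)=f(a)$, $h_b(b)=f(b)$, and that $h_a$ has the smaller slope, so $t_1(x) = \max\{h_a,h_b\}$ agrees with $h_a$ on $[a,x^\ast]$ and with $h_b$ on $[x^\ast,b]$, where $x^\ast$ is the (unique) intersection point $h_a(x^\ast)=h_b(x^\ast)$.

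Next, the function $g(x) := s_1(x) - t_1(x)$ is piecewise linear with $g(a)=g(b)=0$. On $[a,x^\ast]$ a direct subtraction gives $g(x) = (m-f'(a))(x-a)$, and on $[x^\ast,b]$ it gives $g(x) = (f'(b)-m)(b-x)$. Both expressions are nonnegative by the ordering of slopes, and both increase linearly toward $x^\ast$, so the maximum vertical gap is attained at $x^\ast$. Equating the two expressions at $x^\ast$ determines
\begin{equation*}
x^\ast - a \;=\; \frac{(f'(b)-m)(b-a)}{f'(b)-f'(a)}, \qquad b - x^\ast \;=\; \frac{(m-f'(a))(b-a)}{f'(b)-f'(a)},
\end{equation*}
and substituting back yields
\begin{equation*}
\varepsilon(\mathcal R(f,p)) \;=\; g(x^\ast) \;=\; \frac{(m-f'(a))(f'(b)-m)(b-a)}{f'(b)-f'(a)}.
\end{equation*}

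Finally, I would apply the inequality $\alpha\beta \leqslant \tfrac14 (\alpha+\beta)^2$ with $\alpha = m - f'(a)\geqslant 0$ and $\beta = f'(b)-m\geqslant 0$, whose sum is $f'(b)-f'(a)$. This gives $(m-f'(a))(f'(b)-m)\leqslant \tfrac14(f'(b)-f'(a))^2$, and after cancellation
\begin{equation*}
\varepsilon(\mathcal R(f,p)) \;\leqslant\; \frac{(b-a)\,(f'(b)-f'(a))}{4} \;=\; \frac{(b-a)\,|f'(a)-f'(b)|}{4},
\end{equation*}
as claimed. The only subtle step is the identification of $x^\ast$ as the maximizer of $g$; once one recognizes that $g$ is piecewise affine with zeros at the endpoints and a single interior kink, this is immediate, so I expect no serious obstacle in carrying out the argument.
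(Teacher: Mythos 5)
Your argument is correct, and it is more than the paper offers: the paper's ``proof'' of Proposition~\ref{prop:thakur} is simply a citation to Lemma~2 of Thakur (1980), so you have supplied a complete, self-contained derivation of a result the authors only quote. Your route is the standard one and checks out at every step: for $f$ convex, the gap $g = s_1 - t_1$ is indeed piecewise affine with $g(a)=g(b)=0$ and a single kink at the tangent-intersection abscissa $x^\ast$ (which is exactly the apex $\bm v_{0,1}$ of the triangle), the two affine pieces $(m-f'(a))(x-a)$ and $(f'(b)-m)(b-x)$ have nonnegative slopes by convexity ($f'(a)\leqslant m\leqslant f'(b)$), the peak value $\frac{(m-f'(a))(f'(b)-m)(b-a)}{f'(b)-f'(a)}$ is right, and the AM--GM step delivers the factor $\tfrac14$. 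The reduction of the concave case to the convex one via $-f$ is also fine since the max/min in Eq.~\eqref{eq:tangent-plf} swap accordingly and the strength is defined through an absolute value. The only point worth a sentence in a written-up version is the degenerate case $f'(a)=f'(b)$: your closed form for $g(x^\ast)$ divides by $f'(b)-f'(a)$, but for a convex differentiable $f$ equal endpoint derivatives force $f$ to be affine on $[a,b]$, so both sides of \eqref{eq:triangle-error-bound} vanish and the bound holds trivially (and the paper's Definition~\ref{def:base-partition} excludes such sub-intervals anyway).
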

\begin{proof}
See \cite{Thakur1980}.
\end{proof}

\subsection{Algorithm for generating sequence of relaxations} \label{subsec:algo}
The sequence of MILP relaxations for the function $f: [\ell, u] \rightarrow \mathbb R$ is obtained using the following procedure. The base relaxation is given by $\mathcal R(f, p_0)$ where $p_0 = p_f^0$. A sequence of partitions $\{p_0, p_1, p_2, \dots \}$ is then created according to the following procedure: Given $p_k$ and $\mathcal R(f, p_k)$, the upper bound on the strength of the relaxation in every sub-interval $i \in \mathscr S(p_k)$ is computed using Eq. \eqref{eq:triangle-error-bound}; we let $\epsilon_i$ denote this bound for sub-interval $i$. The sub-interval $i$ with the maximum $\epsilon_i$ is then divided into two equal parts. This refinement of the sub-interval is an interval bisection and results in the partition $p_{k+1}$. This procedure results in an infinite sequence of partitions $p_0, p_1, \dots$ such that $p_0 = p_f^0$, $\mathcal P(p_k) \subset \mathcal P(p_{k+1})$ and $|\mathcal P(p_{k+1})| = |\mathcal P(p_{k})| + 1$ for every $k$. The infinite sequence of MILP relaxations is then given by $\mathcal R(f, p_k)$ for $k \in \mathbb Z_+$. We now state and prove the main theoretical properties of this sequence. 

\begin{prop} (Theorem 2.3 in \cite{Burkard1991})\label{thm:bukard}
Given $f: [\ell, u] \rightarrow \mathbb R$, $\delta >0$, and a sub-interval $[a,b]$ where $f$ is convex or concave, there exists a finite $\mathcal N \in \mathbb Z_+$ such that $\varepsilon(\mathcal R(f, (a, b)) < \delta$ for any $k \geqslant \mathcal N$; in particular 
\begin{flalign}
\mathcal N \leqslant \max \left(2, \frac 32 \sqrt{\frac{b-a}{2\delta}\cdot |f'(b) - f'(a)|} \right)
\end{flalign}
\end{prop}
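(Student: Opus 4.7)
The plan is to combine the per-sub-interval error bound from Proposition~\ref{prop:thakur} with a quantitative analysis of the greedy bisection rule described in Section~\ref{subsec:algo}. First, for the partition produced after $k$ bisections inside $[a,b]$, I would apply Proposition~\ref{prop:thakur} to each sub-interval $[x_j, x_{j+1}]$ to obtain $\epsilon_j \leqslant h_j d_j / 4$, where $h_j = x_{j+1} - x_j$ and $d_j = |f'(x_{j+1}) - f'(x_j)|$. Because $f$ is convex or concave on $[a,b]$, the derivative $f'$ is monotone, so the per-sub-interval quantities satisfy $\sum_j h_j = b - a$ and $\sum_j d_j = |f'(b) - f'(a)|$. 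Writing $N$ for the number of sub-intervals, the task becomes to bound $\max_j h_j d_j$ from above in terms of these two totals and $N$.

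The rate I would aim for is $N^{-2}$, and a Cauchy--Schwarz calculation explains why. Suppose the refinement equalized $h_j d_j$ at a common value $c$ across all $N$ sub-intervals; then $h_j = c / d_j$ and hence $\sum_j 1/d_j = (b-a)/c$. Combining with $\sum_j d_j = |f'(b) - f'(a)|$, the Cauchy--Schwarz inequality $N^2 = \bigl(\sum_j 1\bigr)^2 \leqslant \bigl(\sum_j d_j\bigr)\bigl(\sum_j 1/d_j\bigr)$ yields $c \leqslant (b-a)\,|f'(b)-f'(a)|/N^2$. The substance of \cite{Burkard1991} is that interval bisection attains this scaling up to a universal constant. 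Indeed, bisecting the sub-interval $j^{\star}$ of maximum $h_j d_j$ cuts $h_{j^\star}$ in half, and because $f'$ is monotone on the parent interval, each of the two new $d$-values is at most $d_{j^\star}$; consequently each of the new error bounds is at most $\epsilon_{j^\star}/2$. An amortized accounting over the history of splits then gives an estimate of the form $\max_j h_j d_j \leqslant C \cdot (b-a)\,|f'(b)-f'(a)|/N^2$, with a constant $C$ that tightens to $9/2$ under the greedy bisection rule.

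Plugging this into the requirement $\max_j h_j d_j / 4 < \delta$ and solving for $N$ recovers the stated estimate $\mathcal{N} \leqslant (3/2)\sqrt{(b-a)\,|f'(b)-f'(a)|/(2\delta)}$; the outer $\max(2,\cdot)$ handles the degenerate regime where the square-root term is so small that at least one bisection beyond the base partition is still required. The hard step is exactly the equidistribution constant in the previous paragraph: a naive argument that spreads $(b-a)\,|f'(b)-f'(a)|$ uniformly over $N$ bins only gives $O(N^{-1})$ decay, and upgrading to $O(N^{-2})$ requires tracking how the greedy rule concentrates its splits on sub-intervals where both $h_j$ and $d_j$ remain large. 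Since this combinatorial argument is the technical content of \cite[Theorem 2.3]{Burkard1991}, I would cite that result for the $N^{-2}$ bound and use the two surrounding steps as the bridge to the specific constants that appear in the statement above.
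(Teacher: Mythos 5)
The paper's own ``proof'' of this proposition is nothing more than the citation to \cite{Burkard1991}, and your proposal ultimately rests on the same citation for the essential step (the $O(N^{-2})$ decay of $\max_j h_j d_j$ under greedy bisection), so this is essentially the same approach. Your surrounding bookkeeping is sound and the constants check out: requiring $\max_j h_j d_j/4 < \delta$ with $\max_j h_j d_j \leqslant \tfrac{9}{2}(b-a)\,|f'(b)-f'(a)|/N^2$ gives exactly $N > \tfrac{3}{2}\sqrt{(b-a)\,|f'(b)-f'(a)|/(2\delta)}$, which is a useful bridge the paper omits.
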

\begin{proof}
See \cite{Burkard1991}.
\end{proof}

\begin{theorem} \label{thm:strength-convergence}
$\operatorname{Lim}_{k\rightarrow \infty} \varepsilon(\mathcal R(f, p_k)) = 0$.
\end{theorem}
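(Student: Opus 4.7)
The plan is to reduce the theorem to controlling the Thakur-style upper bound from Proposition \ref{prop:thakur}. For each $k$, define
\begin{equation*}
e_k := \max_{[a,b] \in \mathscr{S}(p_k)} \frac{(b-a)\,|f'(b)-f'(a)|}{4}.
\end{equation*}
Applying Proposition \ref{prop:thakur} sub-interval by sub-interval gives $\varepsilon(\mathcal{R}(f,p_k)) \leq e_k$, so it suffices to prove $e_k \to 0$.

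First I would record two structural facts. Since $f \in \mathcal{C}^1([\ell, u])$ by the standing assumption, $f'$ is continuous on a compact interval and hence bounded; set $M := \max_{x \in [\ell, u]} |f'(x)| < \infty$. Also, Definition \ref{def:base-partition} ensures that $f$ is convex or concave on each sub-interval of $p_0 = p_f^0$, so $f'$ is monotone there; because every sub-interval of any admissible $p_k$ is contained in a sub-interval of $p_0$, $f'$ is in fact monotone on every sub-interval produced by the algorithm.

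The second step is to show that $(e_k)$ is non-increasing. When the algorithm bisects an interval $[a,b]$ at its midpoint $c$, monotonicity of $f'$ on $[a,b]$ yields $|f'(c)-f'(a)| + |f'(b)-f'(c)| = |f'(b)-f'(a)|$, from which a one-line computation shows that each child's Thakur bound is at most half the parent's. Because the bisected parent achieves the current maximum $e_k$, the two new bounds are at most $e_k/2$, while every unchanged sub-interval still contributes at most $e_k$; hence $e_{k+1} \leq e_k$.

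The final step is a contradiction via a finite-forest argument. Suppose $e_k \geq \delta > 0$ for every $k$. Then the interval $[a,b]$ bisected at step $k$ satisfies $(b-a)|f'(b)-f'(a)|/4 \geq \delta$, and combining with $|f'(b)-f'(a)| \leq 2M$ yields $b - a \geq 2\delta/M$. I would organize the sub-intervals into a binary forest whose roots are the elements of $\mathscr{S}(p_0)$ and whose internal nodes' children are the two halves produced at each bisection: a node at depth $d$ below a root of length $L \leq u-\ell$ has length $L/2^d$, so the lower bound on $b-a$ forces every bisection to occur at depth at most $D := \log_2((u-\ell)M/(2\delta))$. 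Only finitely many nodes sit at depth $\leq D$, and each can be bisected at most once; but the algorithm performs a bisection at every step, producing infinitely many, a contradiction. I expect the main obstacle to be exactly this interplay: the algorithm is greedy with respect to an \emph{upper} bound on the relaxation strength, and without extra structure a bisection could in principle inflate $e_k$. The convex/concave nature of $f$ on each sub-interval of the base partition --- which forces monotonicity of $f'$ --- is what resolves this, simultaneously delivering monotonicity of $(e_k)$ and the $\tfrac{1}{2}$-contraction that makes the depth bound go through.
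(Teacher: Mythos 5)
Your proof is correct, but it takes a genuinely different route from the paper's. The paper's proof is a two-line appeal to Proposition \ref{thm:bukard} (Burkard et al.): for each sub-interval $i$ of the base partition it takes the finite number $\mathcal N_i$ of refinements guaranteed to drive the local error below $\delta$, and sets $\mathcal M = \sum_i \mathcal N_i$. You instead give a self-contained packing argument: the Thakur bound dominates $\varepsilon(\mathcal R(f,p_k))$, monotonicity of $f'$ on each base sub-interval (which you correctly identify as the load-bearing consequence of Definition \ref{def:base-partition}) gives the halving of the children's bounds, and the lower bound $b-a \geqslant 2\delta/M$ on any interval the greedy rule would still bisect caps the depth of the bisection forest, so only finitely many bisections can occur while $e_k \geqslant \delta$. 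Two remarks on the comparison. First, your argument actually addresses a point the paper's proof glosses over: it is not immediate that $\mathcal M = \sum_i \mathcal N_i$ greedy steps distribute themselves so that each base sub-interval receives its required $\mathcal N_i$ bisections (and Burkard's count is for a fixed interval, not for the greedy schedule); your depth-bound contradiction handles the greedy selection rule explicitly, so in that sense your proof is more complete. Second, the trade-off is quantitative: Burkard's bound gives $\mathcal N_i = O(1/\sqrt{\delta})$ refinements per base sub-interval, whereas your forest has up to $O\bigl((u-\ell)M/\delta\bigr)$ nodes above the critical depth, so the paper's route yields a sharper estimate of how many refinements suffice, while yours establishes only the qualitative limit (which is all the theorem asserts). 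One cosmetic point: you justify boundedness of $f'$ by continuity on a compact set, i.e., by assuming $f \in \mathcal C^1$; the paper's standing hypothesis is only ``bounded and differentiable,'' but since $f'$ is monotone on each sub-interval of $p_f^0$ it is automatically bounded by its values at the finitely many base partition points, so your constant $M$ exists without the extra regularity.
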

\begin{proof}
Given $\delta > 0$, we need a finite $\mathcal M \in \mathbb Z_+$ such that for any $k \geqslant \mathcal M$, $\varepsilon(\mathcal R(f, p_k)) < \delta$. For each sub-interval in $i \in \mathscr S (p_f^0)$, let $\mathcal N_i$ be the finite number provided by proposition \ref{thm:bukard} for the given value of $\delta$. Setting $\mathcal M = \sum_i \mathcal N_i$ completes the proof. 
\end{proof}

\begin{cor} \label{cor:milp-convergence}
$\operatorname{Lim}_{k \rightarrow \infty} \,\operatornamewithlimits{Proj}_{x,y} \mathcal R(f, p_k) = \mathcal X$. \hfill \qedsymbol
\end{cor}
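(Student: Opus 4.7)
The plan is to reduce the set-convergence statement to the scalar convergence already established in Theorem~\ref{thm:strength-convergence}. Let $\Pi_k := \operatorname{Proj}_{x,y} \mathcal{R}(f, p_k)$ and $\varepsilon_k := \varepsilon(\mathcal{R}(f, p_k))$. The core observation I would record first is the sandwich
\[
\mathcal X \;\subseteq\; \Pi_k \;\subseteq\; \bigl\{(x, y) : x \in [\ell, u],\; |y - f(x)| \leq \varepsilon_k \bigr\},
\]
valid for every $k$.

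The left inclusion is just the relaxation property: on each sub-interval of $p_k$ the function $f$ is either convex or concave (by Definition~\ref{def:base-partition} for $p_f^0$, a property inherited by all refinements), so its graph lies inside the triangle built from the secant and the two endpoint tangents. The right inclusion follows directly from Definition~\ref{def:strength}: if $(x, y) \in \Pi_k$, then $x$ lies in some sub-interval $[x_{i-1}, x_i]$ and both $y$ and $f(x)$ lie between $t_i(x)$ and $s_i(x)$, so $|y - f(x)| \leq |t_i(x) - s_i(x)| \leq \varepsilon_k$.

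Taking $k \to \infty$, Theorem~\ref{thm:strength-convergence} drives $\varepsilon_k \to 0$, so the right-hand strip collapses onto $\mathcal X$. Interpreting $\operatorname{Lim}$ as the Hausdorff limit of closed subsets of the compact rectangle $[\ell, u] \times f([\ell, u])$ (equivalently Kuratowski convergence on this compactum), the sandwich yields $d_H(\Pi_k, \mathcal X) \leq \varepsilon_k \to 0$, which is exactly the claim. A monotone reading also works: since each refinement step in Section~\ref{subsec:algo} replaces a parent triangle with two strictly smaller child triangles sitting inside it (the new interior tangent trims but never enlarges the parent, by convexity/concavity on the refined sub-interval), the $\Pi_k$ form a nested decreasing sequence of closed sets, and $\bigcap_k \Pi_k = \mathcal X$ by the strip bound.

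The main obstacle is not analytic but definitional: the corollary does not pin down the mode of set convergence, so I would first fix it (Hausdorff, or equivalently Kuratowski on this compactum) and then observe that the sandwich converts Theorem~\ref{thm:strength-convergence} into the desired statement in one line. The only continuity input needed is that $f$ is continuous, which follows from differentiability; it is used implicitly to guarantee that any limit point of a sequence $(x_k, y_k) \in \Pi_k$ actually sits on the graph of $f$.
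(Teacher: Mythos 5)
Your argument is correct and is exactly the one the paper intends: the corollary is stated without proof as an immediate consequence of Theorem~\ref{thm:strength-convergence}, and your sandwich $\mathcal X \subseteq \Pi_k \subseteq \{(x,y): x\in[\ell,u],\ |y-f(x)|\leqslant \varepsilon_k\}$ combined with $\varepsilon_k \to 0$ is precisely that deduction made explicit (in the Hausdorff sense the paper itself adopts later for Theorem~\ref{thm:lp-convergence}). The only negligible slip is that $\Pi_k$ need not lie in $[\ell,u]\times f([\ell,u])$, since a triangle apex can overshoot the range of $f$ (for $x^2$ on $[-1,1]$ the apex is $(0,-1)$); replacing that rectangle by any compact set containing $\Pi_0$ fixes this without changing the argument.
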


We now present some theoretical properties of the linear programming (LP) relaxation of \eqref{eq:triangle-chain}. To do so, we let $\mathcal R^{lp}(f, p)$ be the LP relaxation of $\mathcal R(f,p)$, and $\operatorname{Proj}_{x,y} \mathcal R^{lp}(f, p)$ the projection of $\mathcal R^{lp}(f, p)$ to the $(x,y)$ space. To prove some properties of this LP relaxation, we invoke the definition of a \textit{locally ideal} MILP formulation, introduced by Padberg and Rijal \cite{Padberg2012}: an MILP formulation of a set $\mathcal Y$ is said to be locally ideal if the vertices of its corresponding LP relaxation satisfy all required integrality conditions. Additionally, in any locally ideal MILP formulation of the set $\mathcal Y$, the projection of its LP relaxation to the original space of variables ($x$, $y$ in this case) is exactly the convex hull of $\mathcal Y$ i.e., $\operatorname{conv}(\mathcal Y)$ \cite{Jeroslow1984}.

\begin{prop}
The incremental MILP formulation in Eq. \eqref{eq:triangle-chain} i.e., the MILP formulation for $\mathcal R(f, p)$, is locally ideal.
\end{prop}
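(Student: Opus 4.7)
The plan is to reduce the question of local ideality to a statement about vertices of a lower-dimensional polytope and then invoke total unimodularity. The equalities \eqref{eq:y-triangle}--\eqref{eq:x-triangle} express $x$ and $y$ as affine functions of $\bm\delta$ alone, so the map $(\bm\delta,\bm z)\mapsto(x(\bm\delta),y(\bm\delta),\bm\delta,\bm z)$ is an affine isomorphism between the LP relaxation of \eqref{eq:triangle-chain} and the reduced polytope $Q$ in $(\bm\delta,\bm z)$-space defined by \eqref{eq:delta_bound}--\eqref{eq:cont} together with $0\leqslant z_i\leqslant 1$ replacing \eqref{eq:bin}. Extreme points correspond one-to-one under this isomorphism, so it suffices to prove that every vertex of $Q$ has $\bm z\in\{0,1\}^{k-1}$.

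To establish this, I would show that the coefficient matrix of $Q$ is totally unimodular with integer right-hand side. Order the linking inequalities as \eqref{eq:delta_bound} first, followed, for each $i=2,\dots,k$, by the pair $z_{i-1}\leqslant\delta_2^{i-1}$ and $\delta_1^i+\delta_2^i\leqslant z_{i-1}$, and order the variables as $\delta_1^1,\delta_2^1,z_1,\delta_1^2,\delta_2^2,z_2,\ldots,z_{k-1},\delta_1^k,\delta_2^k$. Under this arrangement, each column has at most two nonzero entries; when there are two, they lie in consecutive rows and carry opposite signs. Specifically, $\delta_2^i$ contributes $+1$ to the ``fill triangle $i$'' row and $-1$ to the immediately following ``release $z_i$'' row, and $z_i$ contributes $+1$ and $-1$ to the next two consecutive rows. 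This is exactly the network-matrix pattern, which is totally unimodular. Appending the box constraints $0 \leqslant \delta_1^i,\delta_2^i \leqslant 1$ and $0\leqslant z_i\leqslant 1$ as identity-style rows preserves total unimodularity, and every right-hand side in the resulting system is $0$ or $1$.

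By the standard polyhedral fact that a totally unimodular system with integer right-hand side has only integer-valued vertices, every extreme point of $Q$ is integer in all coordinates, and in particular in each $z_i$. Pulling this back through the affine isomorphism, every vertex of the LP relaxation of \eqref{eq:triangle-chain} satisfies $z_i\in\{0,1\}$ for all $i\in\{1,\dots,k-1\}$, which is precisely the definition of a locally ideal MILP formulation given just before the proposition statement.

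The main obstacle I anticipate is bookkeeping at the boundaries of the chain: the first inequality \eqref{eq:delta_bound} has no associated $z_0$ and plays the role of a ``boundary'' row, and the final pair of triangle constraints has no $z_k$ to link forward. One must verify that the ``at most two nonzeros of opposite sign in consecutive rows'' property is not broken at either end, and that the columns for $\delta_1^i$ (which appear in only one linking row) do not spoil the network structure. Once this careful ordering check is done, the remainder of the argument is a direct appeal to standard linear-programming theory.
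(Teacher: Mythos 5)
Your proposal is correct and follows essentially the same route as the paper: drop the defining equalities for $x$ and $y$ (which you justify more carefully via the affine isomorphism onto $(\bm\delta,\bm z)$-space), observe that the remaining constraint matrix is a network matrix and hence totally unimodular, and conclude integrality of the $\bm z$ components at every vertex of the LP relaxation. Your version simply supplies the column-by-column sign check that the paper leaves implicit.
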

\begin{proof}
The matrix for the constraint system in Eq. \eqref{eq:triangle-chain}, ignoring the constraints defining $x$ and $y$, is a network matrix and hence is totally unimodular. Therefore, all the extreme points of the LP relaxation of Eq. \eqref{eq:triangle-chain} satisfy the integrality restrictions on the vector of  $\bm z$ variables. 
\end{proof}

\begin{cor} \label{cor:sharpness}
Given $f: [\ell, u] \rightarrow \mathbb R$ and an admissible partition $p \in \mathcal A_f[\ell, u]$, $\operatorname{Proj}_{x,y} \mathcal R^{lp}(f, p)$ is the convex hull of $k$ triangles, where $k = |\mathcal P(p)| - 1$. \hfill \qedsymbol
\end{cor}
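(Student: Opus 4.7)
The plan is to deduce the corollary almost immediately from the preceding proposition together with the general fact about locally ideal formulations that is quoted in the text just above Proposition. Concretely, the preceding proposition has established that the incremental formulation in Eq. \eqref{eq:triangle-chain} is locally ideal (its constraint matrix, with $x$ and $y$ excluded, is a network matrix, hence totally unimodular, so every extreme point of the LP relaxation has $\bm z$ integral). The paper has already recalled the classical result of Jeroslow and Lowe \cite{Jeroslow1984} that, for any locally ideal MILP formulation of a set $\mathcal Y$, the projection of the LP relaxation onto the original variable space equals $\operatorname{conv}(\mathcal Y)$. Applying that result with $\mathcal Y$ taken as the feasible set of the MILP \eqref{eq:triangle-chain} gives
\begin{equation*}
\operatorname{Proj}_{x,y} \mathcal R^{lp}(f,p) \;=\; \operatorname{conv}\bigl(\operatorname{Proj}_{x,y} \mathcal R(f,p)\bigr).
\end{equation*}

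Next I would identify the right-hand side explicitly. The discussion just before Definition \ref{def:strength} observes that $\operatorname{Proj}_{x,y} \mathcal R(f,p)$ is exactly the disjunctive (set-theoretic) union of the $k$ triangles whose vertices are $\{\bm v_{i-1}, \bm v_{i-1,i}, \bm v_i\}$ for $i=1,\dots,k$. Combining this identification with the displayed equation above yields
\begin{equation*}
\operatorname{Proj}_{x,y} \mathcal R^{lp}(f,p) \;=\; \operatorname{conv}\!\left(\bigcup_{i=1}^{k} \operatorname{conv}\{\bm v_{i-1},\bm v_{i-1,i},\bm v_i\}\right),
\end{equation*}
which is precisely the convex hull of the $k$ triangles, as claimed.

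There is essentially no obstacle beyond bookkeeping: the substantive work (local idealness via total unimodularity and the Jeroslow--Lowe projection principle) is already in place, and the only thing left to verify is the elementary identification of $\operatorname{Proj}_{x,y} \mathcal R(f,p)$ with the union of the triangles, which follows by reading off, for each $i$, the parametrization of triangle $i$ in terms of $(\delta_1^i,\delta_2^i)$ under the activation $z_{i-1}=1$ (equivalently $\delta_2^{i-1}=1$) dictated by \eqref{eq:delta_bound}--\eqref{eq:fill-constraints}. If anything is delicate, it is only to check that no spurious points are added by the chaining constraints, but this is immediate because when $\delta_1^i+\delta_2^i>0$ the constraints force $\delta_2^{j}=1$ for all $j<i$ and $\delta_1^{j}=\delta_2^{j}=0$ for all $j>i$, so the $(x,y)$ contribution reduces exactly to a point in triangle $i$.
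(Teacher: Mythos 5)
Your proposal is correct and follows exactly the route the paper intends: the corollary is stated with a \qedsymbol precisely because it is an immediate consequence of the preceding proposition (local idealness via total unimodularity) together with the quoted Jeroslow--Lowe fact that for a locally ideal formulation the projected LP relaxation equals the convex hull of the modeled set, which here is the union of the $k$ triangles. Your extra bookkeeping verifying that $\operatorname{Proj}_{x,y}\mathcal R(f,p)$ is exactly the union of the triangles is a reasonable elaboration of what the paper already asserts informally at the end of Section~\ref{sec:geometry}.
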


We remark the only vertices of the triangle that can constitute the extreme points of $\operatorname{Proj}_{x,y} \mathcal R^{lp}(f, p)$ are in the set $\mathcal V = \{\bm v_0, \bm v_k\} \cup \{\bm v_{0, 1}, \dots, \bm v_{k-1, k}\}$ where $k = |\mathcal P(p)| - 1$. This is because the other vertices can be expressed as a convex combination of vertices in $\mathcal V$. In fact, when $f$ is convex, $\mathcal V$ is exactly the set of extreme points of $\operatorname{Proj}_{x,y} \mathcal R^{lp}(f, p)$. Hence, an equivalent linear programming formulation for $\operatorname{Proj}_{x,y} \mathcal R^{lp}(f, p)$ is given by the following convex hull description of vertices in $\mathcal V$:
\begin{flalign}
\operatorname{Proj}_{x,y} \mathcal R^{lp}(f, p) = \operatorname{Proj}_{x,y} \left\{(x, y, \bm \lambda) \in [\ell, u] \times \mathbb R \times \bm \Delta_{|\mathcal V|} : \begin{pmatrix} x \\ y \end{pmatrix} = \sum_{i=1}^{|\mathcal V|} \lambda_i \bm w_i \right\} \label{eq:equivalent-lp}
\end{flalign}
where, $\bm w_1, \bm w_2, \dots, \bm w_{|\mathcal V|}$ are vertices in $\mathcal V$, and $\bm \Delta_{|\mathcal V|}$ is a $|\mathcal V|$-dimensional simplex.

\begin{cor} \label{cor:lp-contains-conv-hull}
Given $f: [\ell, u] \rightarrow \mathbb R$ and an admissible partition $p \in \mathcal A_f[\ell, u]$, $\operatorname{conv}(\mathcal X) \subseteq \operatorname{Proj}_{x,y} \mathcal R^{lp}(f, p) $ is the convex hull of $k$ triangles, where $k = |\mathcal P(p)| - 1$. 
\end{cor}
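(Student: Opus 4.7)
The statement to prove has two parts: first, that $\operatorname{conv}(\mathcal X) \subseteq \operatorname{Proj}_{x,y} \mathcal R^{lp}(f, p)$, and second, that $\operatorname{Proj}_{x,y} \mathcal R^{lp}(f, p)$ equals the convex hull of the $k$ triangles. The second part is already the content of Corollary \ref{cor:sharpness}, so the only new claim is the containment relation, and my plan is to dispatch it essentially as an immediate corollary of the construction.

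The plan is to argue by a short chain of containments. First, I would record that $\mathcal X \subseteq \operatorname{Proj}_{x,y}\mathcal R(f, p)$: this is exactly the relaxation property established at the start of Section \ref{sec:geometry}, where each sub-interval's triangle was built from the tangents at the endpoints and the secant, and Definition \ref{def:base-partition} guarantees $f$ is convex or concave on each sub-interval of any $p \in \mathcal A_f[\ell,u]$, so the triangle contains the graph of $f$ over that sub-interval. Hence the disjunctive union covers $\mathcal X$. Second, since $\mathcal R^{lp}(f,p)$ is obtained from $\mathcal R(f,p)$ by dropping the integrality restrictions on $\bm z$, we have $\mathcal R(f,p) \subseteq \mathcal R^{lp}(f,p)$, and therefore $\operatorname{Proj}_{x,y}\mathcal R(f,p) \subseteq \operatorname{Proj}_{x,y}\mathcal R^{lp}(f,p)$. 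Chaining these two inclusions gives $\mathcal X \subseteq \operatorname{Proj}_{x,y}\mathcal R^{lp}(f,p)$.

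To upgrade this to $\operatorname{conv}(\mathcal X) \subseteq \operatorname{Proj}_{x,y}\mathcal R^{lp}(f,p)$, I would then note that $\operatorname{Proj}_{x,y}\mathcal R^{lp}(f,p)$ is a polyhedron (being the linear projection of the polyhedron $\mathcal R^{lp}(f,p)$), hence convex. Any convex set containing $\mathcal X$ also contains $\operatorname{conv}(\mathcal X)$, which establishes the desired inclusion. Finally I would invoke Corollary \ref{cor:sharpness} to identify $\operatorname{Proj}_{x,y}\mathcal R^{lp}(f,p)$ with the convex hull of the $k$ triangles, completing the statement.

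I do not anticipate any real obstacle; the only mild point requiring care is the first step, namely the explicit verification that the triangle built from the two tangents and the secant actually contains the arc of $f$ over a sub-interval in which $f$ is convex or concave. This follows from the fact that on a convex arc the tangent lines lie below the curve while the secant lies above (and the reverse for a concave arc), so the region bounded by these three linear pieces sandwiches the graph. Since this geometric observation was already taken as given in the definition of the base relaxation at the start of Section \ref{sec:geometry}, the corollary reduces to a one-line consequence of convexity of projected polyhedra plus Corollary \ref{cor:sharpness}.
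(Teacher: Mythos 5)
Your proposal is correct and follows essentially the same route as the paper, whose proof is the one-line observation that the result follows from Corollary \ref{cor:sharpness} together with $\mathcal X \subset \mathcal R(f,p)$; you have simply unpacked those two facts (the triangle-by-triangle containment of the graph, the ordering of the MILP and LP feasible sets under projection, and the convexity of the projected LP region) into explicit steps. No gaps.
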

\begin{proof}
The proof follows from Corollary \ref{cor:sharpness} and the fact that $\mathcal X \subset \mathcal R(f, p)$. 
\end{proof}

\begin{prop} \label{prop:lp-containment}
Given $f:[\ell, u] \rightarrow \mathbb R$ and any infinite sequence of partitions, $\{p_0, p_1, \dots\}$ generated by the algorithm in Sec. \ref{subsec:algo} i.e., using an interval bisection with the bisection occurring in the interval with maximum vertical distance at that iteration, $\operatorname{Proj}_{x,y} \mathcal R^{lp}(f, p_{k+1}) \subset \operatorname{Proj}_{x,y} \mathcal R^{lp}(f, p_{k})$ for any $k \in \mathbb Z_{\geqslant 0}$. 
\end{prop}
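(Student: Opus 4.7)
The plan is to exploit Corollary \ref{cor:sharpness}, which identifies both $\operatorname{Proj}_{x,y} \mathcal R^{lp}(f, p_k)$ and $\operatorname{Proj}_{x,y} \mathcal R^{lp}(f, p_{k+1})$ as convex hulls of chains of triangles. Because the convex-hull operator is monotone with respect to its argument, it suffices to prove that the triangle chain of $p_{k+1}$ is contained in the triangle chain of $p_k$. The interval-bisection step produces $p_{k+1}$ from $p_k$ by splitting a single sub-interval $[x_i,x_{i+1}]\in\mathscr S(p_k)$ at its midpoint $m$, and leaves every other sub-interval intact. All but the $i$-th triangle of the two chains therefore coincide, so the entire proposition reduces to a single geometric statement: the two \emph{child} triangles produced by the bisection are contained in the \emph{parent} triangle $T_i$ built over $[x_i, x_{i+1}]$.

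To establish this containment, I would invoke Definitions \ref{def:base-partition} and \ref{def:admissible-partitions}: admissibility forces $f$ to be either convex or concave on the whole sub-interval $[x_i, x_{i+1}]$. Handle the convex case first; the concave one is symmetric across the secant. Writing $S_{ab}$ and $T_a$ for the secant through $(a,f(a)),(b,f(b))$ and the tangent at $a$ respectively, the parent $T_i$ is bounded above by $S_{x_i,x_{i+1}}$ and below by the ``V'' envelope $\max(T_{x_i},T_{x_{i+1}})$, while the union of child triangles is bounded above by the two shorter secants $S_{x_i,m}$, $S_{m,x_{i+1}}$ and below by a ``W'' envelope assembled from pieces of $T_{x_i}, T_m, T_{x_{i+1}}$. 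The upper envelope shrinks because $f(m)\leq S_{x_i,x_{i+1}}(m)$ by convexity, so each shorter secant is a linear function that agrees with $S_{x_i,x_{i+1}}$ at one endpoint and lies on or below it at the other, hence does not exceed $S_{x_i,x_{i+1}}$ on $[x_i,x_{i+1}]$. The lower envelope rises because $T_m$ dominates $T_{x_{i+1}}$ on $[x_i,m]$: at $x=m$ the tangent-below-curve inequality gives $T_m(m)=f(m)\geq T_{x_{i+1}}(m)$, and the slope inequality $f'(m)\leq f'(x_{i+1})$ coming from monotonicity of $f'$ makes $T_m-T_{x_{i+1}}$ non-increasing in $x$, hence non-negative throughout $(-\infty,m]$; the symmetric argument gives $T_m\geq T_{x_i}$ on $[m,x_{i+1}]$. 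Combining, at every $x\in[x_i,x_{i+1}]$ the vertical slice occupied by the child region is a subset of the vertical slice occupied by the parent, which delivers the required containment.

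If the $\subset$ in the statement is meant strictly, I would exhibit the parent apex $\bm v_{i,i+1}$ as the missing point. Clause (ii) of Definition \ref{def:base-partition} rules out affinity of $f$ on $[x_i,x_{i+1}]$, so the two comparisons above hold strictly at the apex's $x$-coordinate, placing $\bm v_{i,i+1}$ strictly below the new lower envelope; since every triangle of $p_{k+1}$ other than the two children has its $x$-support disjoint from the interior of $[x_i,x_{i+1}]$, the apex cannot be written as a convex combination of points of the child chain, and strictness follows.

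The main obstacle is the lower-envelope comparison. The clean way to handle it is to isolate the small lemma ``for $f$ convex on $[a,c]$ and any interior point $b$, $T_b\geq T_c$ on $(-\infty,b]$ and $T_b\geq T_a$ on $[b,\infty)$'' and apply it with $(a,b,c)=(x_i,m,x_{i+1})$; without this abstraction it is easy to misread the direction of the inequality on each side of $m$, since $T_m$ is not globally above the outer tangents. Once the lemma is in hand, the rest of the argument is routine max-of-linear-functions bookkeeping.
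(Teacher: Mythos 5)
Your proof is correct and follows essentially the same route as the paper's: the paper simply asserts that the MILP projections are nested because $f$ is convex or concave on each sub-interval of the base partition and then passes to the LP relaxations via local idealness (equivalently, Corollary \ref{cor:sharpness} plus monotonicity of the convex-hull operator), so your child-triangles-inside-parent-triangle argument is exactly the geometric content that the paper's one-line observation leaves implicit, and you work it out correctly. The only loose spot is the optional strictness addendum: ``disjoint $x$-support'' does not by itself rule out the parent apex being a convex combination of points drawn from triangles on either side of $[x_i,x_{i+1}]$, though the claim can be repaired by noting that every point of the refined chain lies on or above the tangent at the new midpoint while the parent apex lies strictly below that tangent line.
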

\begin{proof}
The proof follows from the observation that for any infinite sequence of partitions $\{p_0, p_1, \dots\}$ generated using the procedure in Sec. \ref{subsec:algo}, $\operatorname{Proj}_{x,y} \mathcal R(f, p_{k+1}) \subset \operatorname{Proj}_{x,y} \mathcal R(f, p_{k})$. This is true because the partition $p_0$ is the base partition of the function $f$ and for any sub-interval $s \in \mathscr S(p_0)$, $f$ is either convex or concave in that sub-interval. This, combined with the fact that the MILP formulation for $\mathcal R(f, p_k)$ is locally ideal for any $k \in \mathbb Z_{\geqslant 0}$, implies $\operatorname{Proj}_{x,y} \mathcal R^{lp}(f, p_{k+1}) \subset \operatorname{Proj}_{x,y} \mathcal R^{lp}(f, p_{k})$.
\end{proof}

Corollary \ref{cor:lp-contains-conv-hull} along with Proposition \ref{prop:lp-containment} basically imply that as algorithm in Sec. \ref{subsec:algo} is applied to any $f: [\ell, u] \rightarrow \mathbb R$, it generates an infinite sequence of partitions $\{p_0, p_1, \dots\}$ such that the projections of the LP relaxations of $\mathcal R(f, p_k)$ to the $(x, y)$ variable space form a decreasing sequence of sets with every projection containing the $\operatorname{conv}(\mathcal X)$. We now present a main theorem that states that this decreasing sequence of sets formed by the projection of the LP relaxations i.e., $\operatorname{Proj}_{x, y} \mathcal R^{lp}(f, p_k)$ indeed converges to $\operatorname{conv}(\mathcal X)$. To do so, we introduce the following definitions and some known results from the literature. 

\begin{definition} \label{def:hausdorff-distance}
Given two closed sets $A$ and $B$, the Hausdorff distance between the sets $A$ and $B$, denoted by $d_H(A, B)$ is defined as 
\begin{flalign}
d_H(A, B) = \max \left\{ \max_{a \in A} \,\min_{b \in B} d(a, b), \max_{b \in B} \, \min_{a \in A} d(a, b)\right\} \label{eq:hausdorff}
\end{flalign}
where, $d(a, b)$ is the Euclidean distance between the points $a$ and $b$.
\end{definition}

\begin{prop} \label{prop:hausdorff-vertex}
If $A$ is a polytope and $B$ is a closed convex subset of $A$, then there exists a vertex $v \in A$ such that $d_H(A, B) = \min_{b\in B} d(v, B)$ i.e., the Hausdorff distance between $A$ and $B$ is achieved at a vertex of $A$.
\end{prop}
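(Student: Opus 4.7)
The plan is to reduce the Hausdorff distance to the distance function to the convex set $B$ and then exploit the fact that a convex function on a polytope attains its maximum at a vertex. First, observe that since $B \subseteq A$, for every $b \in B$ we have $\min_{a \in A} d(a,b) = 0$, so the second term in the definition \eqref{eq:hausdorff} vanishes and
\begin{flalign*}
d_H(A, B) \;=\; \max_{a \in A} \min_{b \in B} d(a, b) \;=\; \max_{a \in A} g(a),
\end{flalign*}
where $g(a) := \min_{b \in B} d(a, b)$ is the Euclidean distance from the point $a$ to the set $B$. Note that the inner minimum is attained because $B$ is closed and the Euclidean norm is coercive, and the outer maximum is attained because $A$ is compact (being a polytope) and $g$ is continuous (in fact $1$-Lipschitz).

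Next, I would argue that $g$ is a convex function on $\mathbb{R}^n$. Given $a_1, a_2 \in \mathbb{R}^n$ and $\lambda \in [0,1]$, pick $b_1, b_2 \in B$ attaining $g(a_1)$ and $g(a_2)$. Since $B$ is convex, $\lambda b_1 + (1-\lambda) b_2 \in B$, and therefore
\begin{flalign*}
g(\lambda a_1 + (1-\lambda) a_2) \;\leqslant\; \| \lambda a_1 + (1-\lambda) a_2 - (\lambda b_1 + (1-\lambda) b_2)\| \;\leqslant\; \lambda g(a_1) + (1-\lambda) g(a_2),
\end{flalign*}
by the triangle inequality applied to the norm. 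Hence $g$ is convex.

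Finally, I would invoke the standard fact that a convex function attains its maximum over a compact convex polytope at an extreme point of the polytope: if $a^\star \in A$ maximizes $g$ and $a^\star = \sum_i \mu_i v_i$ is a convex combination of the vertices $\{v_i\}$ of $A$, then $g(a^\star) \leqslant \sum_i \mu_i g(v_i) \leqslant \max_i g(v_i)$, so some vertex $v$ of $A$ must also attain the maximum. Therefore $d_H(A, B) = g(v) = \min_{b \in B} d(v, b)$, as required.

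The only mildly subtle step is the convexity of the distance function $g$, but this follows from a short triangle inequality argument as above; everything else is compactness and the extreme-point characterization of convex maximization on polytopes. I do not anticipate any real obstacle in carrying out these steps.
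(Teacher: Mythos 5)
Your proposal is correct and follows essentially the same route as the paper's proof: drop the second term in the Hausdorff distance since $B\subseteq A$, note that the distance-to-$B$ function is convex when $B$ is convex, and conclude via maximization of a convex function over a polytope being attained at a vertex. You simply fill in the details (the triangle-inequality argument for convexity and attainment of the extrema) that the paper leaves to a citation.
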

\begin{proof}
First we observe that if $B \subset A$, then $d_H(A, B) = \max_{a \in A} \,\min_{b \in B} d(a, b)$. Note that when $B$ is a closed convex set, the function $g(a) = \min_{b \in B} d(a, b)$ is a convex function. Hence, the problem of computing $d_H(A, B)$ reduces to maximizing a convex function over the polytope $A$ and it is known that the maximum for this problem is achieved at a vertex of $A$ \cite{Boyd2004}.
\end{proof}

We now present the main theorem that proves the convergence of the sequence of LP relaxations of the MILPs to $\operatorname{conv}(\mathcal X)$. 

\begin{theorem} \label{thm:lp-convergence}
$\operatorname{Lim}_{k \rightarrow \infty}  \operatorname{Proj}_{x, y} \mathcal R^{lp}(f, p_k)  = \operatorname{conv}(\mathcal X)$. 
\end{theorem}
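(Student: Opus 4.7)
The natural interpretation of the limit is Hausdorff convergence (this is confirmed by the fact that Definition~\ref{def:hausdorff-distance} and Proposition~\ref{prop:hausdorff-vertex} are introduced immediately before the statement). My plan is therefore to show that $d_H(\operatorname{Proj}_{x,y}\mathcal R^{lp}(f,p_k),\operatorname{conv}(\mathcal X))\to 0$ as $k\to\infty$. Since Corollary~\ref{cor:lp-contains-conv-hull} gives the inclusion $\operatorname{conv}(\mathcal X)\subseteq\operatorname{Proj}_{x,y}\mathcal R^{lp}(f,p_k)$, one side of the Hausdorff max is zero, and it suffices to bound the one-sided distance
\begin{flalign*}
\max_{a\in \operatorname{Proj}_{x,y}\mathcal R^{lp}(f,p_k)}\,\min_{b\in \operatorname{conv}(\mathcal X)} d(a,b).
\end{flalign*}

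Apply Proposition~\ref{prop:hausdorff-vertex} with $A=\operatorname{Proj}_{x,y}\mathcal R^{lp}(f,p_k)$ and $B=\operatorname{conv}(\mathcal X)$: because $A$ is a polytope and $B$ is a closed convex subset of $A$, the Hausdorff distance is attained at some vertex of $A$. By the remark following Corollary~\ref{cor:sharpness}, every such vertex belongs to $\mathcal V_k=\{\bm v_0,\bm v_k\}\cup\{\bm v_{0,1},\dots,\bm v_{k-1,k}\}$. The ``on-curve'' vertices $\bm v_i$ lie on the graph $\mathcal X\subseteq\operatorname{conv}(\mathcal X)$, and contribute distance zero. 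Hence the Hausdorff distance reduces to a maximum, over sub-intervals $i$, of the distance from the tangent-intersection vertex $\bm v_{i-1,i}$ to $\operatorname{conv}(\mathcal X)$.

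The key observation is that the secant point $\bm q_i=(v_{i-1,i}^x,\,s_i(v_{i-1,i}^x))$ lies on the chord connecting $\bm v_{i-1}$ and $\bm v_i$, hence in $\operatorname{conv}(\mathcal X)$. The Euclidean distance from $\bm v_{i-1,i}$ to $\bm q_i$ is purely vertical and equals $|v_{i-1,i}^y-s_i(v_{i-1,i}^x)|$. Because $f$ is convex or concave on each sub-interval of any admissible partition, $t_i$ attains its extremum over $[x_{i-1},x_i]$ exactly at the apex $\bm v_{i-1,i}$, and therefore this vertical gap equals $\max_{x\in[x_{i-1},x_i]}|t_i(x)-s_i(x)|$, which in turn is bounded above by $\varepsilon(\mathcal R(f,p_k))$ by Definition~\ref{def:strength}. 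Combining,
\begin{flalign*}
d_H\bigl(\operatorname{Proj}_{x,y}\mathcal R^{lp}(f,p_k),\operatorname{conv}(\mathcal X)\bigr)\;\leqslant\;\varepsilon(\mathcal R(f,p_k)),
\end{flalign*}
and the right-hand side tends to zero by Theorem~\ref{thm:strength-convergence}. Monotonicity of the sets from Proposition~\ref{prop:lp-containment} then gives the limit in the Hausdorff sense.

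The main obstacle, and the step I would write most carefully, is justifying that $t_i$'s extremum in the sub-interval is attained precisely at the $x$-coordinate of $\bm v_{i-1,i}$, so that the vertical strength in sub-interval $i$ can indeed be read off as $|v_{i-1,i}^y - s_i(v_{i-1,i}^x)|$. This uses crucially that $p_k$ is admissible, so $f$ is purely convex or purely concave on $[x_{i-1},x_i]$; without this one could not identify the apex with the maximizer of $|t_i-s_i|$, and the clean bound by $\varepsilon(\mathcal R(f,p_k))$ would fail.
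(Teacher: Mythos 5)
Your proof is correct and follows the same overall architecture as the paper's: interpret the limit as Hausdorff convergence, use the containment from Corollary~\ref{cor:lp-contains-conv-hull} to reduce to the one-sided distance, invoke Proposition~\ref{prop:hausdorff-vertex} to localize that distance at a tangent-intersection vertex, bound it by $\varepsilon(\mathcal R(f,p_k))$, and squeeze using Theorem~\ref{thm:strength-convergence}. The one substantive difference is in the middle step: the paper simply cites \cite{Burkard1991} for the inequality $d_H(\operatorname{Proj}_{x,y}\mathcal R^{lp}(f,p_k),\operatorname{conv}(\mathcal X))\leqslant\varepsilon(\mathcal R(f,p_k))$, whereas you prove it from scratch by exhibiting the secant point $\bm q_i=(v_{i-1,i}^x,\,s_i(v_{i-1,i}^x))$ as an explicit element of $\operatorname{conv}(\mathcal X)$ and observing that the apex-to-secant vertical gap is exactly $\max_{x\in[x_{i-1},x_i]}|t_i(x)-s_i(x)|$ because $s_i-t_i$ is the pointwise minimum (or maximum) of two affine functions vanishing at the endpoints, so its extremum sits at the tangents' crossing point. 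This makes your argument self-contained where the paper defers to a reference, and you correctly flag that admissibility of $p_k$ (so that $f$ is purely convex or concave on each sub-interval, and $v_{i-1,i}^x\in[x_{i-1},x_i]$) is what makes the apex identification work. The only cosmetic issue is the reuse of $k$ both as the sequence index and as $|\mathcal P(p_k)|-1$ in your $\mathcal V_k$; worth renaming one of them if you write this up.
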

\begin{proof}
Proving the convergence of the monotonically decreasing sequence of sets $\operatorname{Proj}_{x, y} \mathcal R^{lp}(f, p_k)$ to $\operatorname{conv}(\mathcal X)$ is equivalent to proving the statement that as $k \rightarrow \infty$, $d_H(\operatorname{Proj}_{x, y} \mathcal R^{lp}(f, p_k), \operatorname{conv}(\mathcal X)) \rightarrow 0$ (see \cite{Salinetti1979}). For finite $k$, due to Proposition \ref{prop:hausdorff-vertex}, the Hausdorff distance between $\operatorname{Proj}_{x, y} \mathcal R^{lp}(f, p_k)$ and $\operatorname{conv}(\mathcal X)$, if strictly greater than zero, is achieved at some vertex  of $\operatorname{Proj}_{x, y} \mathcal R^{lp}(f, p_k)$ that is the intersection of two tangents to the function $f(x)$ and this vertex cannot be contained in $\operatorname{conv}(\mathcal X)$. Furthermore, authors in \cite{Burkard1991} prove that the Hausdorff distance between the two sets is bounded above by relaxation strength $\varepsilon(\mathcal R(f, p_k))$ given in Eq. \eqref{eq:relaxation-strength}. This shows that the sequence of Hausdorff distances, being non-negative, is bounded from below by $0$ and is bounded from above by a sequence that converges to $0$ (by Theorem \ref{thm:strength-convergence}). Hence, the sequence of Hausdorff distances converges to zero as well, thereby establishing the set-theoretic convergence of $\operatorname{Lim}_{k \rightarrow \infty}  \operatorname{Proj}_{x, y} \mathcal R^{lp}(f, p_k)$ to the limit $\operatorname{conv}(\mathcal{X})$. 
\end{proof} 

\section{Computational Results} \label{sec:results}
The overarching theme of results presented in this section is to demonstrate the use of our relaxations to solve general mixed-integer nonlinear programs. To that end, instances and univariate functions are hand-picked to show the utility of these relaxations in different contexts. Benchmarking the effectiveness of these relaxations on a wider class of test instances is beyond the scope of this article. All the computational experiments were implemented using the Julia programming language \cite{Bezanson2017} using JuMP v0.21.0 \cite{Dunning2017} and run on a MacBook Pro with a 2.9 GHz Dual-Core Intel Core i5 processor and 16GB RAM. The relaxations presented in this article are also made publicly available as a Julia package (see \url{https://github.com/sujeevraja/PolyhedralRelaxations.jl}).

\paragraph{Strength and size of the MILP relaxations} 
The first set of results is aimed at demonstrating the strength of the MILP relaxation for different univariate functions. For each of the chosen univariate function, we generate MILP relaxations using the interval bisection scheme on the base partition with two stopping criteria (i) the strength of the MILP relaxation (see Eq. \eqref{eq:relaxation-strength}) is less than a pre-specified tolerance, denoted by $\bm \epsilon$, and (ii) the number of partitions excluding the base partition introduced by the refinement scheme does not exceed a pre-specified number, denoted by $\bm b$. We report the number of partitions introduced in the relaxation and the strength of the relaxation obtained using the stopping criteria (i) and (ii), respectively. Table \ref{tab:basic} presents these results for four nonlinear non-convex univariate functions. The functions $\sin x$, $x\cdot |x|$ and $S(x)$ were chosen because these univariate functions frequently occur in optimization problems concerning power networks \cite{Lu2018}, natural gas networks \cite{Sundar2018} and logistic regression \cite{Lleinbaum2002}, respectively. Note that in Table \ref{tab:basic}, the lower the value of $\bm \epsilon$, the tighter the relaxation.
\begingroup
\setlength{\tabcolsep}{10pt} 
\renewcommand{\arraystretch}{1.5} 
\begin{table}[ht]
    \footnotesize
    \centering
    \begin{tabular}{ccccccccc}
        \toprule
        \multirow{2}{*}{$f(x)$} & \multirow{2}{*}{$[\ell, u]$} & \multirow{2}{*}{$p_f^0$} & \multicolumn{3}{c}{\# partitions} & \multicolumn{3}{c}{strength of relaxation} \\ 
        \cmidrule{4-9}
        & & & $\bm \epsilon = \infty$ & $\bm \epsilon = 0.1$ & $\bm \epsilon = 0.01$ & $\bm b = 0$ & $\bm b = 50$ & $\bm b = 100$ \\ 
        \midrule
        $\sin x$ & $[0, 2\pi]$ & $[0, \pi, 2\pi]$ & 2 & 12 & 28 & 1.5707 & 0.0009 & 0.0009 \\
        $x^3$ & $[-1, 1]$ & $[-1, 0, 1]$ & 2 & 6 & 26 & 0.7500 & 0.0014 & 0.0004 \\
        $x\cdot |x|$ & $[-2, 2]$ & $[-2, 0, 2]$ & 2 & 16 & 32 & 2.0000 & 0.0078 & 0.0020 \\
        $S(x)$ & $[-5, 5]$ & $[-5, 0, 5]$ & 2 & 6 & 14 & 0.3042 & 0.0009 & 4.32E-5 \\
        \bottomrule
    \end{tabular}
    \caption{Here, $S(x)$ is the logistic function defined as $1/{(1+e^{-x})}$. The number of partitions corresponding to $\bm \epsilon = \infty$ is the number of partitions in $p_f^0$ and the strength of the relaxation for $\bm b=0$ is $\varepsilon(\mathcal R(f, p_f^0))$.}
    \label{tab:basic}
\end{table}

\paragraph{Visualization of the LP relaxations} 
Here, we show how the LP relaxations evolve with increasing number of partitions to illustrate the convergence of the sequence of LP relaxations to the convex hull of the graph of the univariate function. To that end, we choose three univariate functions $\sin x$, $x \cdot |x|$, and $x^4 - x^3$. Fig. \ref{fig:envelopes-sin}--\ref{fig:envelopes-poly} show the projection of the LP relaxation in the space of $(x, y)$ variables with increasing partition count for these functions. In all figures, $\bm b$ is the number of partitions excluding the base partition of the univariate functions considered. The red curve denotes the actual function; the green and the blue lines are the over- and under-estimators, respectively.

\begin{figure}[!ht]
\centering
    \begin{subfigure}[t]{0.3\textwidth}
        \centering
        \includegraphics[height=1.2in]{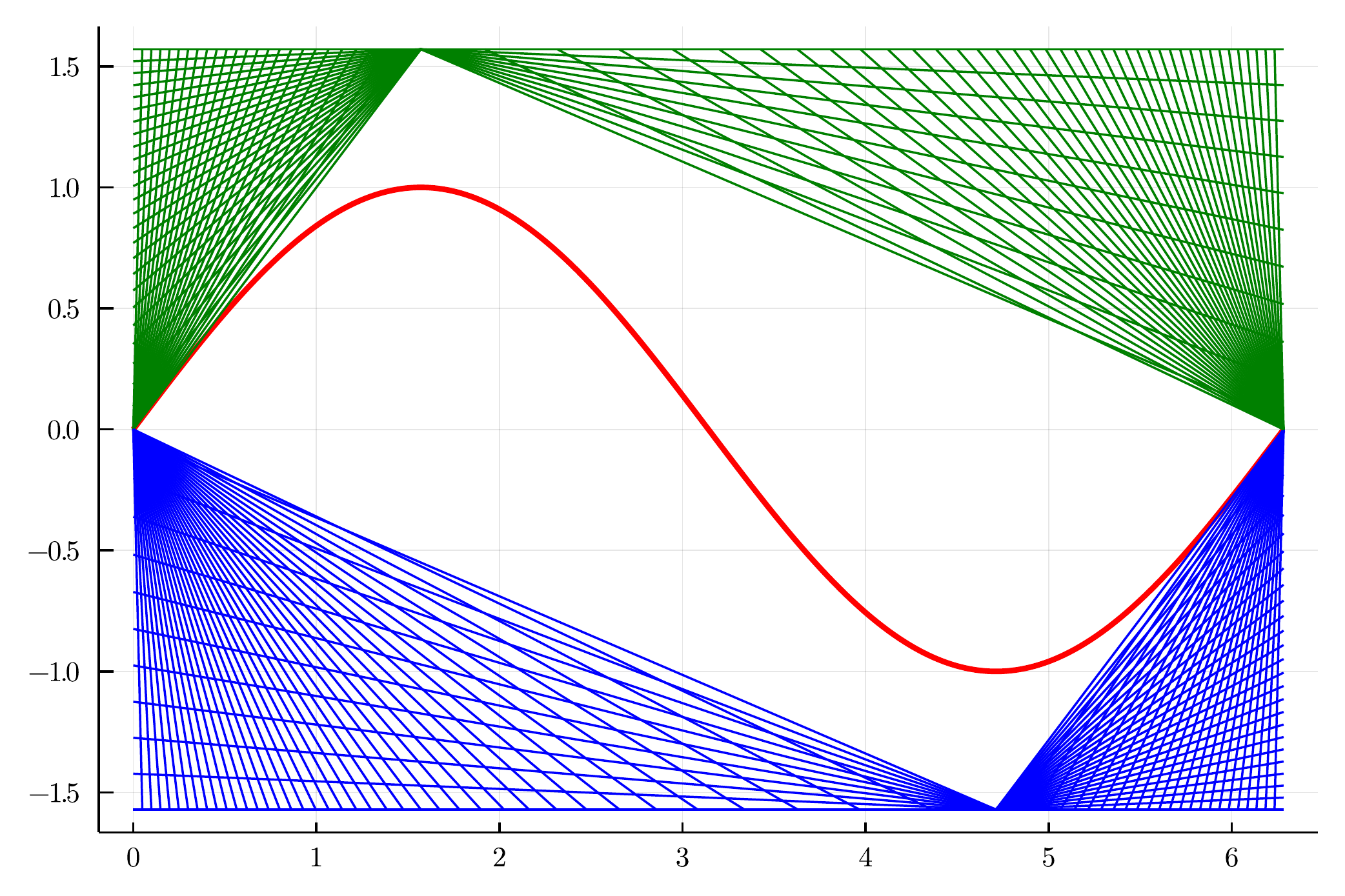}
        \caption{$\bm b = 0$}
    \end{subfigure}%
    \centering
    \begin{subfigure}[t]{0.3\textwidth}
        \centering
        \includegraphics[height=1.2in]{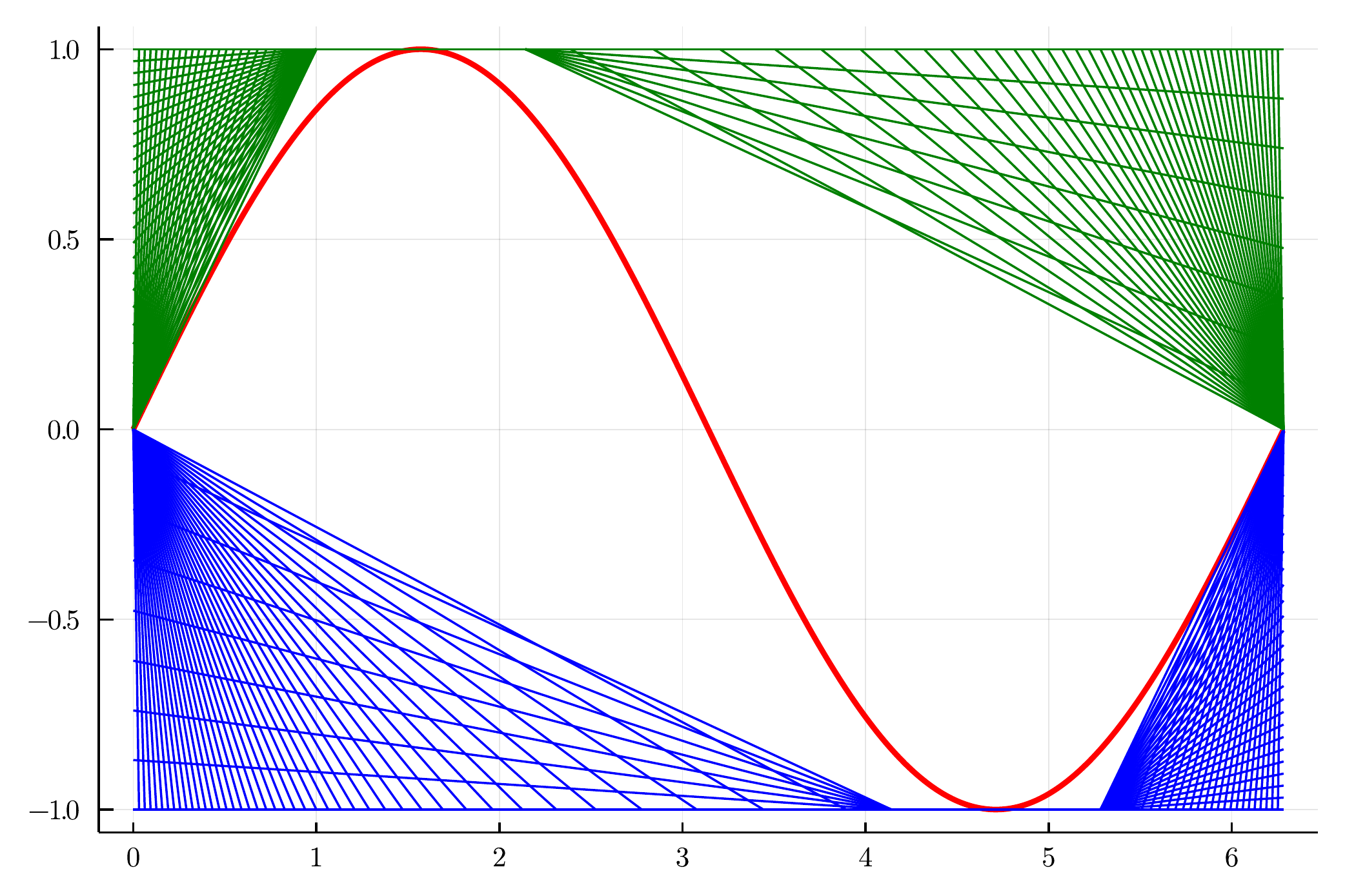}
        \caption{$\bm b = 2$}
    \end{subfigure}%
    \centering
    \begin{subfigure}[t]{0.3\textwidth}
        \centering
        \includegraphics[height=1.2in]{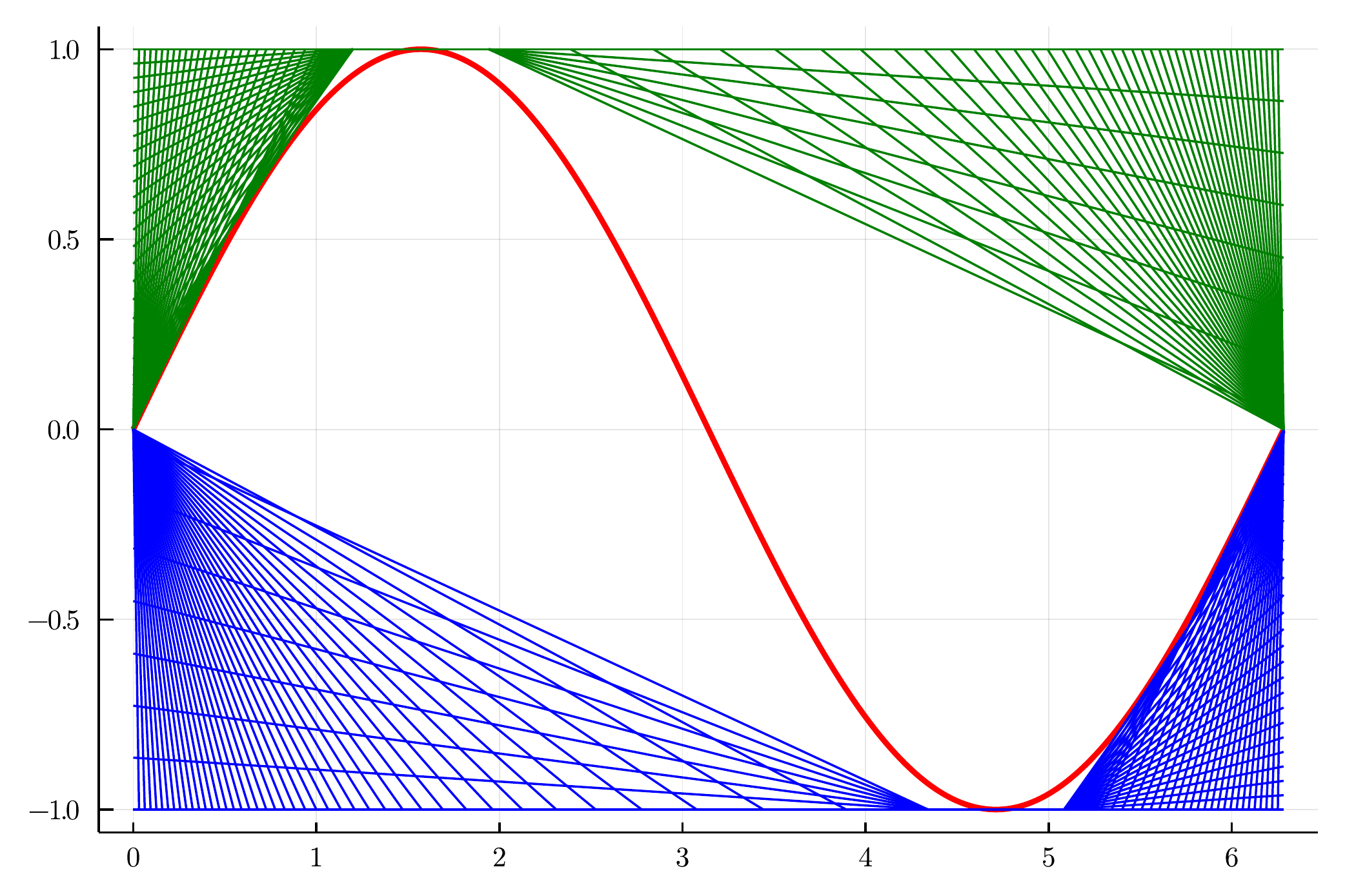}
        \caption{$\bm b = 6$}
    \end{subfigure}%
    \caption{LP relaxations of the convex hull of $y = \sin x$ with a domain of $[0, 2\pi]$.}
    \label{fig:envelopes-sin}
\end{figure}
\begin{figure}[!ht]
\centering
    \begin{subfigure}[t]{0.3\textwidth}
        \centering
        \includegraphics[height=1.2in]{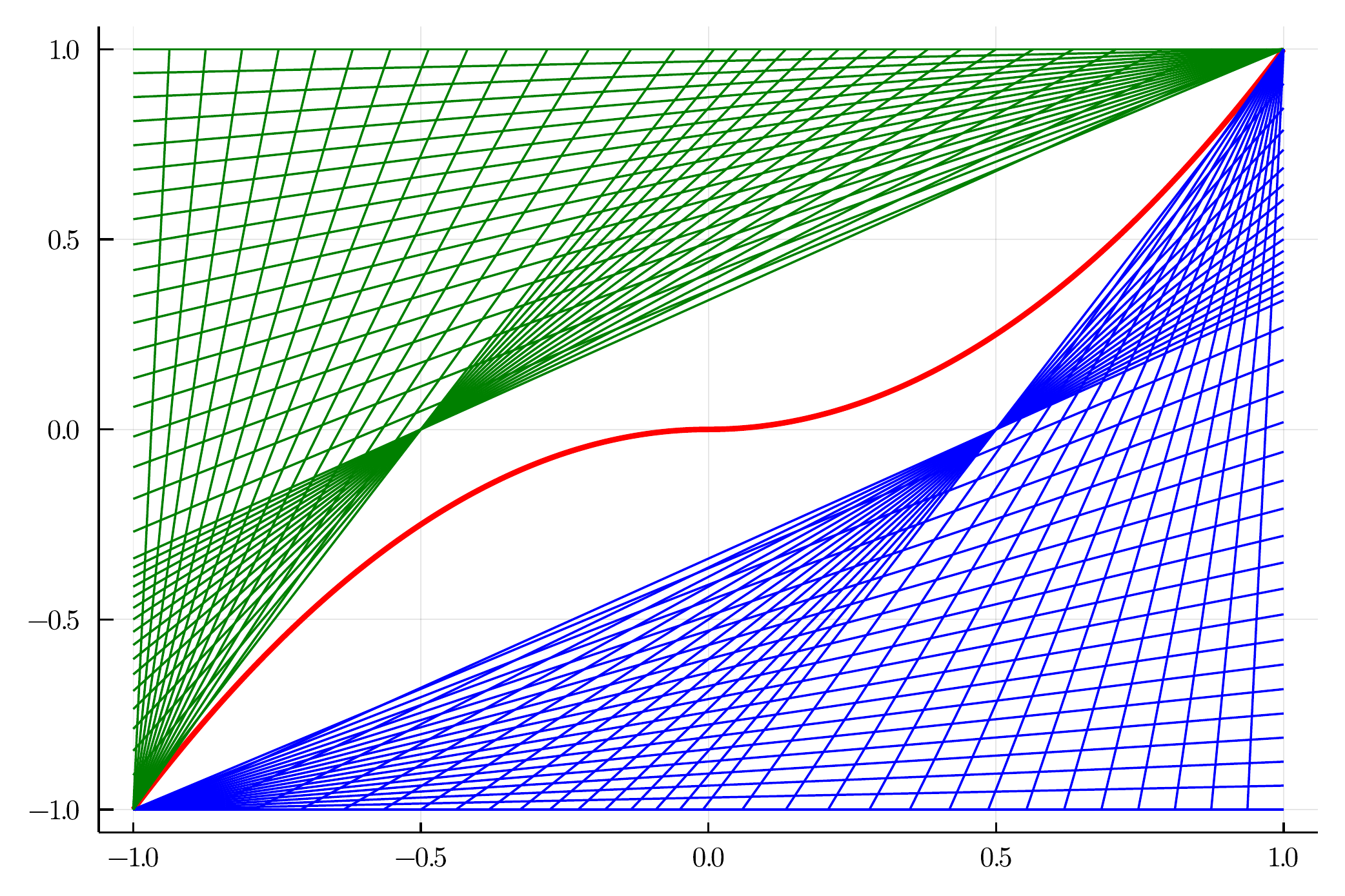}
        \caption{$\bm b = 0$}
    \end{subfigure}%
    \centering
    \begin{subfigure}[t]{0.3\textwidth}
        \centering
        \includegraphics[height=1.2in]{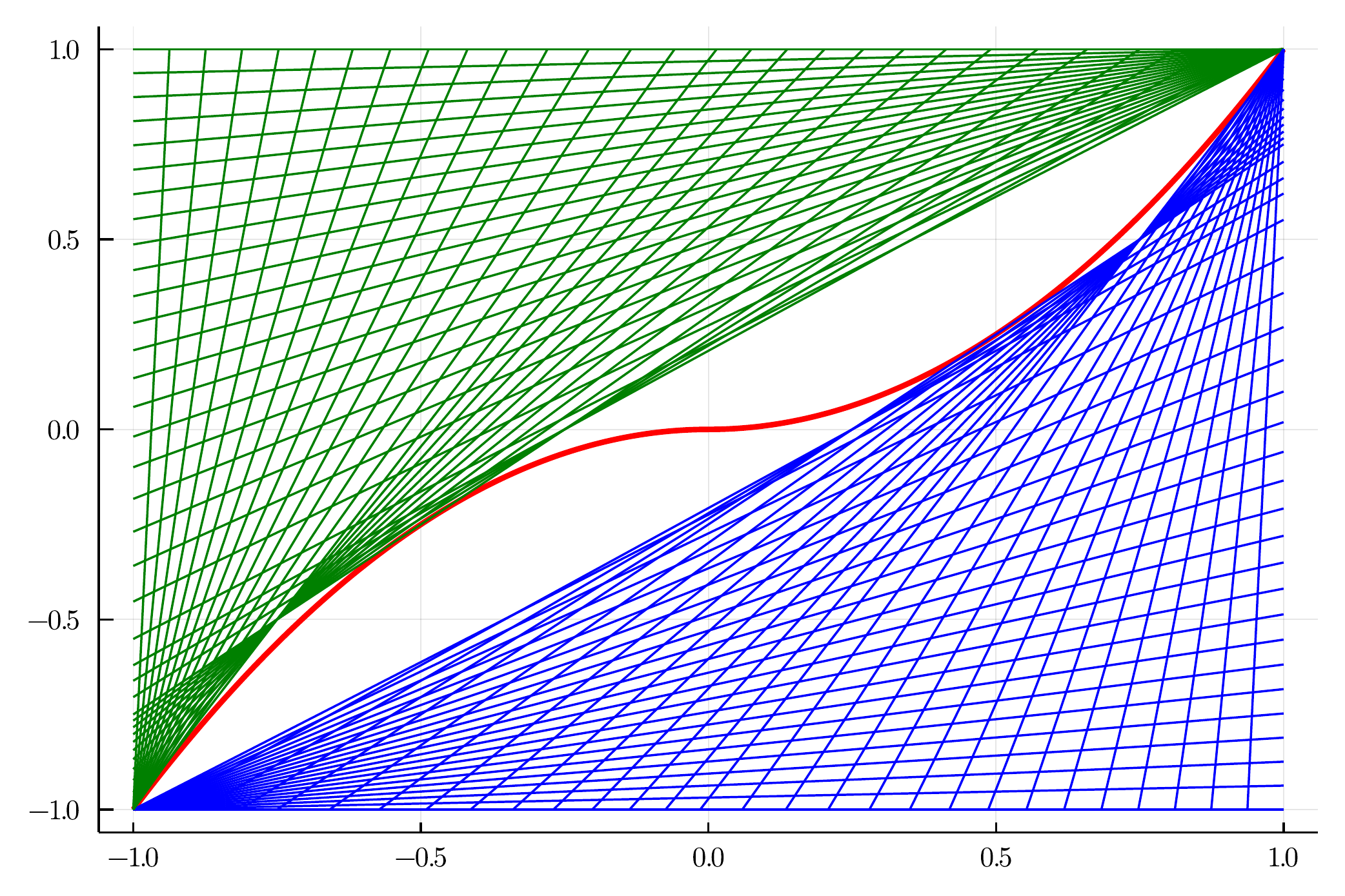}
        \caption{$\bm b = 2$}
    \end{subfigure}%
    \centering
    \begin{subfigure}[t]{0.3\textwidth}
        \centering
        \includegraphics[height=1.2in]{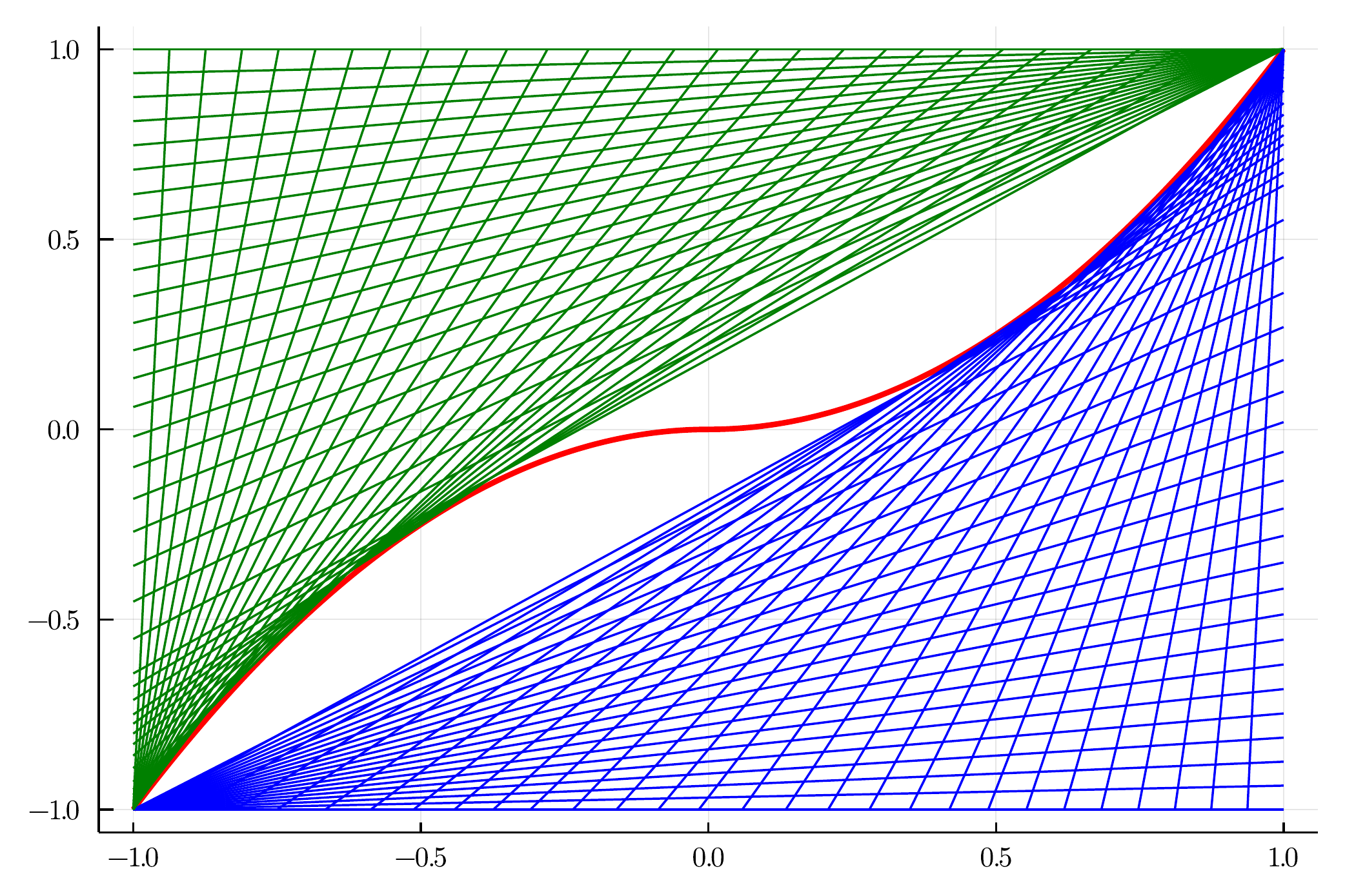}
        \caption{$\bm b = 6$}
    \end{subfigure}%
    \caption{LP relaxations of the convex hull of $y = x\cdot |x|$ with a domain of $[-1, 1]$.}
    \label{fig:envelopes-xabsx}
\end{figure}
\begin{figure}[!ht]
\centering
    \begin{subfigure}[t]{0.3\textwidth}
        \centering
        \includegraphics[height=1.2in]{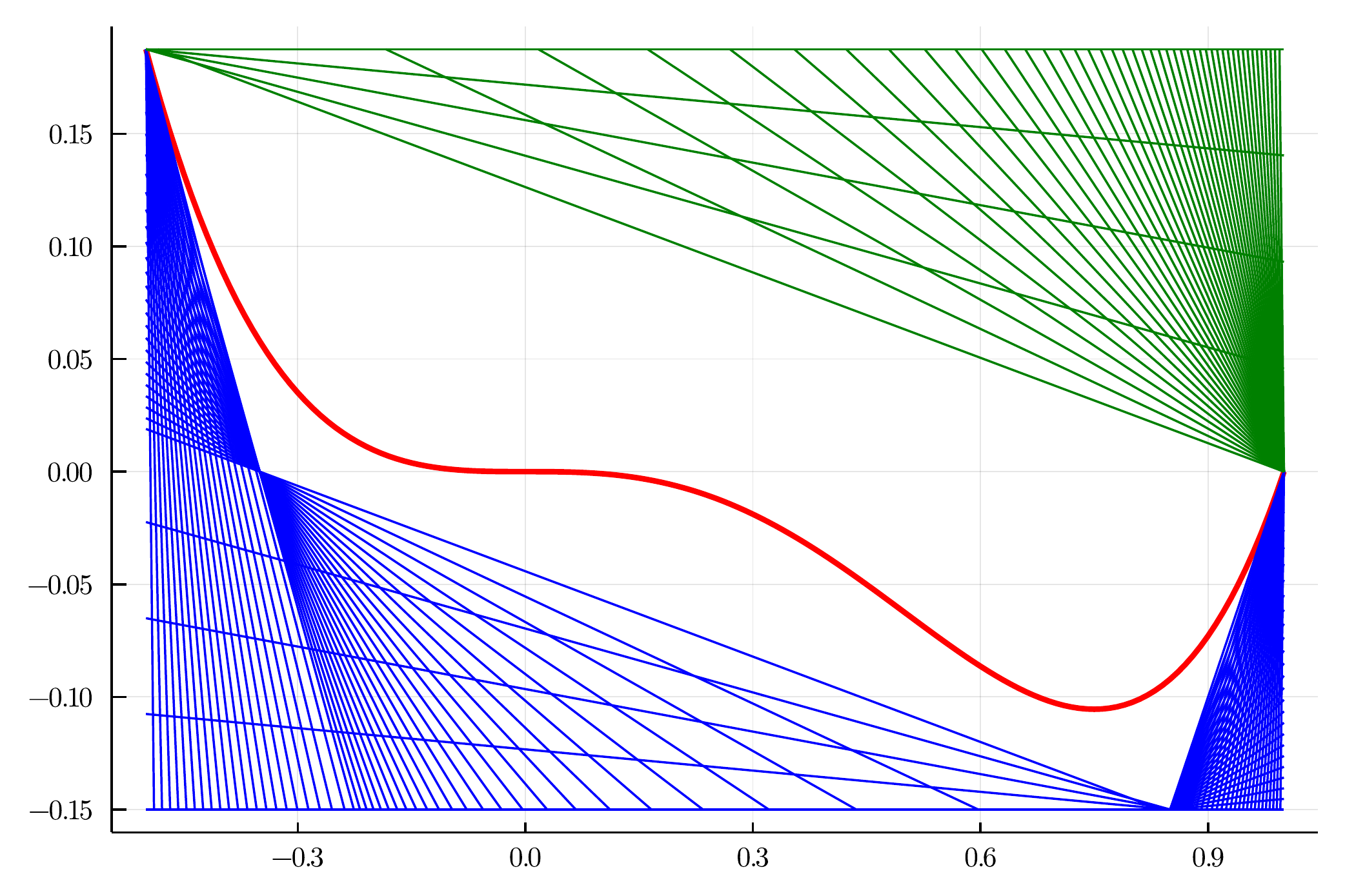}
        \caption{$\bm b = 0$}
    \end{subfigure}%
    \centering
    \begin{subfigure}[t]{0.3\textwidth}
        \centering
        \includegraphics[height=1.2in]{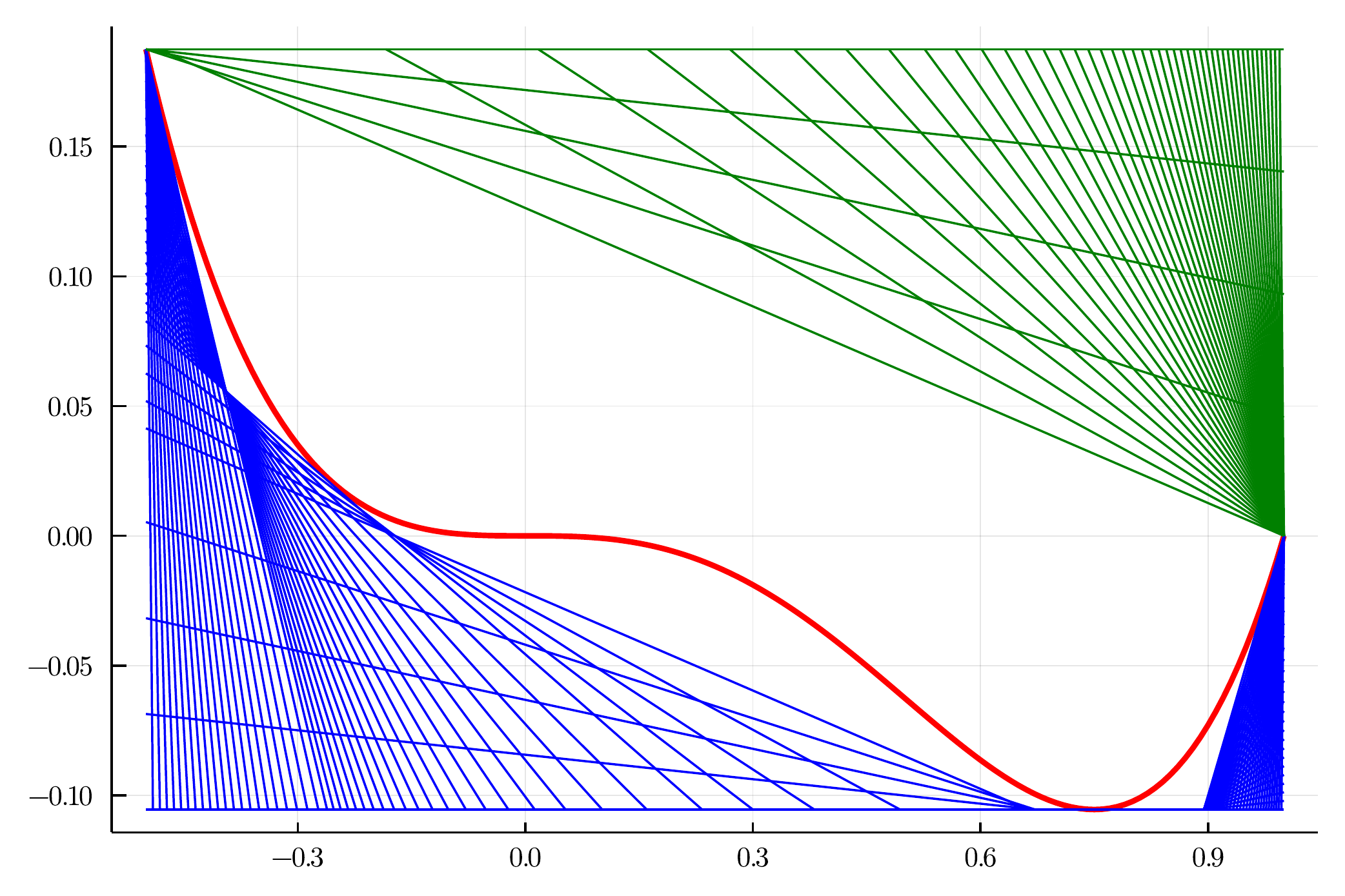}
        \caption{$\bm b = 3$}
    \end{subfigure}%
    \centering
    \begin{subfigure}[t]{0.3\textwidth}
        \centering
        \includegraphics[height=1.2in]{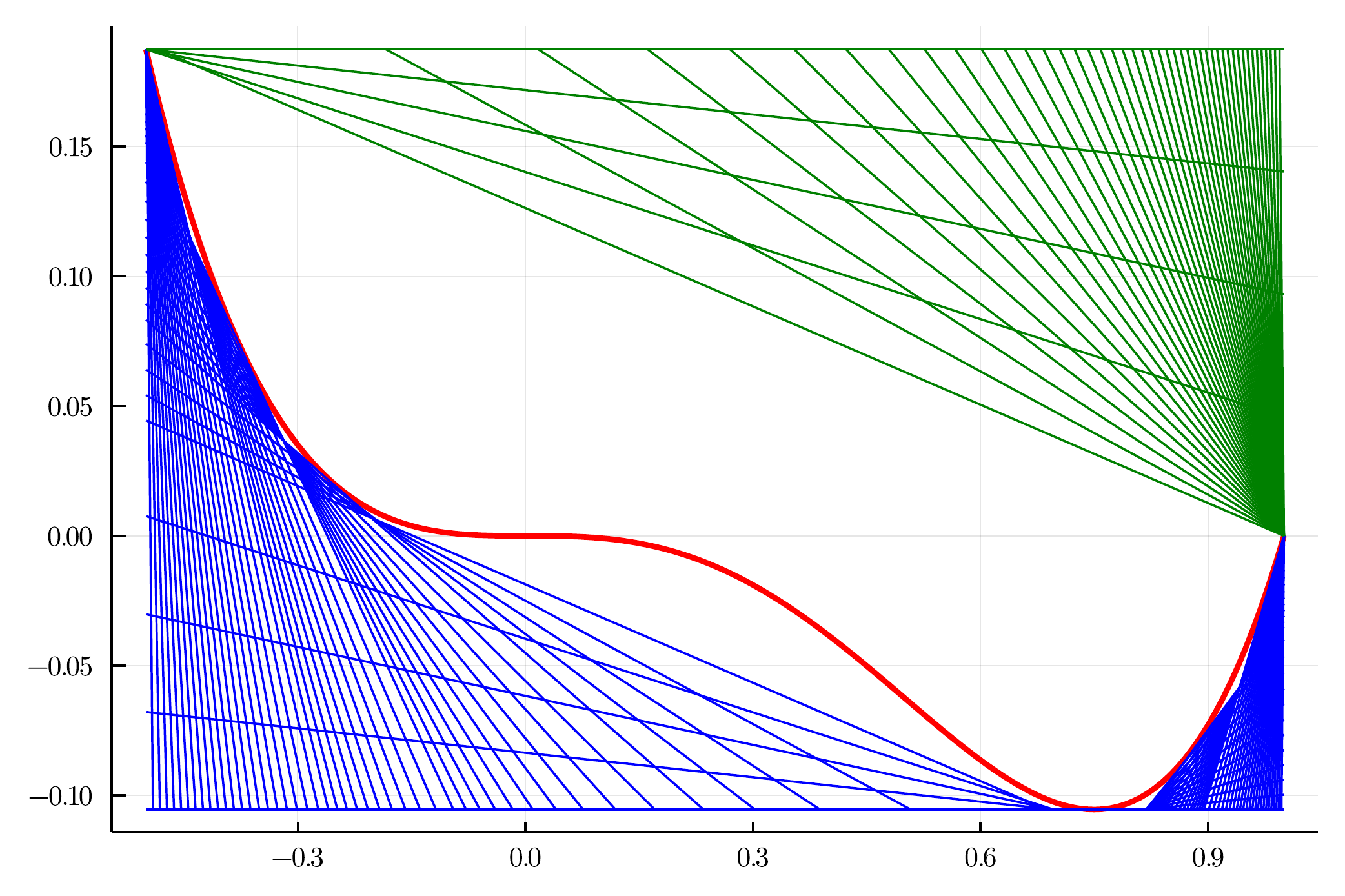}
        \caption{$\bm b = 9$}
    \end{subfigure}%
    \caption{LP relaxations of the convex hull of $y = x^4 - x^3$ with a domain of $[-0.5, 1.0]$.}
    \label{fig:envelopes-poly}
\end{figure}

\paragraph{Relaxations for separable Mixed-Integer Nonlinear Programs from MINLPLib}
The results presented in Tables \ref{tab:milp} and \ref{tab:lp} aim to show the strength of the MILP and the LP relaxations on separable Mixed-Integer Nonlinear programs/Nonlinear Programs (MINLP/NLP) instances from MINLPLib \cite{Bussieck2003}. MINLPs/NLPs are said to be separable if each nonlinear function in the problem can be decomposed into a sum of univariate functions. Four separable instances (i) \texttt{trig} (ii) \texttt{ex4\_1\_1}, and (iii) \texttt{fo7} (iv) \texttt{ramsey} are chosen to illustrate the effectiveness of relaxations presented in the article.

The \texttt{trig}, \texttt{ramsey} and \texttt{fo7} instances are chosen to illustrate the relaxations' effectiveness on a separable NLPs with trigonometric functions, $\log$, $x^a$ where $a$ is fractional and MINLP respectively. The instance \texttt{ex4\_1\_1} is chosen to compare the quality of relaxations on different reformulations of the problem. We present the original NLP of \texttt{ex4\_1\_1} (Eq. \eqref{eq:full}) and its two equivalent reformulations (Eq. \eqref{eq:reformulation1} and Eq. \eqref{eq:reformulation2}) below:
\begin{flalign}
\texttt{ex4\_1\_1}: \quad \min_{x \in [-2, 11]} \quad 0.1 + x^6 - 2.08 x^5 + 0.4875 x^4 + 7.1 x^3 - 3.95  x^2 - x 
\label{eq:full}
\end{flalign}
\begin{subequations}
\begin{flalign}
\texttt{ex4\_1\_1}^a: \quad & \min_{x \in [-2, 11]} \quad y \label{eq:r1-obj} \\ 
\text{ subject to :} \quad & y = 0.1 + x^6 - 2.08 x^5 + 0.4875 x^4 + 7.1 x^3 - 3.95  x^2 - x \label{eq:r1-lift} 
\end{flalign}
\label{eq:reformulation1}
\end{subequations}
\begin{subequations}
\begin{flalign}
\texttt{ex4\_1\_1}^b: \quad & \min_{x \in [-2, 11]}\quad 0.1 + y_1 + y_2 + y_3 + y_4 + y_5 - x  \label{eq:r2-obj} \\ 
\text{ subject to :} \quad & y_1 = x^6 \quad  y_2 = - 2.08 x^5 \quad y_3 =  0.4875 x^4  \label{eq:r2-lift1} \\ 
y_4 = 7.1 x^3 \quad y_5 = -3.95  x^2  \label{eq:r2-lift2} 
\end{flalign}
\label{eq:reformulation2}
\end{subequations}
The reformulations differ in the way univariate functions are lifted by addition of auxiliary variables.
When the MILP and the LP relaxations are applied to the two reformulations \texttt{ex4\_1\_1}$^a$ and \texttt{ex4\_1\_1}$^b$, the strength of the respective relaxations differ in the sense that the reformulation \texttt{ex4\_1\_1}$^a$ always produces better solutions than the reformulation \texttt{ex4\_1\_1}$^b$. On the other hand, some computational effort is required to obtain the base partition of the univariate function in Eq. \eqref{eq:r1-lift} unlike the five univariate functions in Eq. \eqref{eq:r2-lift1}--\eqref{eq:r2-lift2}. The takeaway from this exercise is that it is better to find base partitions of a single univariate function that is expressed as a sum/difference of univariate terms, instead of breaking them up into multiple univariate functions and using partitions of the functions. In Tables \ref{tab:milp} and \ref{tab:lp}, the relative gap \% is the gap between the lower bound (LB) and the global optimal solution in the column ``g-opt''. Results from Table \ref{tab:milp} show that just the application of the vanilla MILP relaxation, without additional enhancements, constructed using the algorithm in Sec. \ref{subsec:algo} yields a solution within 1\% of the global optimum for the chosen instances. The results in Table \ref{tab:lp} indicate that quick LP-based lower bounds can also be obtained by utilizing the formulation in Eq. \eqref{eq:equivalent-lp} to relax the graph of the nonlinear univariate function. 

\begin{table}[ht]
    \footnotesize
    \centering
    \begin{tabular}{cccccccc}
        \toprule
        \multirow{2}{*}{instance} & \multirow{2}{*}{g-opt} & \multicolumn{3}{c}{LB (relative gap \%)} & \multicolumn{3}{c}{computation time (sec)} \\ 
        \cmidrule{3-8}
        & & $\bm \epsilon = \infty$ & $\bm \epsilon = 0.1$ & $\bm \epsilon = 0.01$ & $\bm \epsilon = \infty$ & $\bm \epsilon = 0.1$ & $\bm \epsilon = 0.01$ \\
        \midrule
        \texttt{trig} & -3.7625 & -5.0749 (25.86) & -3.7943 (0.84) & -3.7694 (0.18) & 1.97 & 1.02 & 3.55 \\
        \texttt{ramsey} & -2.4874 & -3.0637 (18.80) & -3.0637 (18.80) & -2.5309 (1.7) & 0.0 & 0.0 & 0.01 \\
        \texttt{ex4\_1\_1}$^a$ & -7.4873 & -15.8046 (52.63) & -7.5239 (0.48) & -7.4892 (0.02) & 0.07 & 1.14 & 3.49  \\
        \texttt{ex4\_1\_1}$^b$ & -7.4873 & -27.24E4 (99.99) & -7.5633 (1.00) & -7.4941 (0.09) & 0.01 & 25.23 & 161.23 \\
        \texttt{fo7} & 20.7298 & 16.7051 (24.09) & 18.7674 (10.46) & 20.5481 (0.88) & 3.0 & 9.15 & 50.79 \\
        \bottomrule
    \end{tabular}
    \caption{MILP relaxation values on MINLPLib instances}
    \label{tab:milp}
\end{table}

\begin{table}[ht]
    \footnotesize
    \centering
    \begin{tabular}{cccccccc}
        \toprule
        \multirow{2}{*}{instance} & \multirow{2}{*}{g-opt} & \multicolumn{3}{c}{LB (relative gap \%)} & \multicolumn{3}{c}{computation time (sec)} \\ 
        \cmidrule{3-8}
        & & $\bm \epsilon = \infty$ & $\bm \epsilon = 0.1$ & $\bm \epsilon = 0.01$ & $\bm \epsilon = \infty$ & $\bm \epsilon = 0.1$ & $\bm \epsilon = 0.01$ \\
        \midrule
        \texttt{trig} & -3.7625 & -6.2831 (40.11) & -4.0377 (6.81) & -4.0034 (6.01) & 0.00 & 0.01 & 0.01\\
        \texttt{ramsey} & -2.4874 & -3.0637 (18.80) & -3.0637 (18.80) & -2.5305 (1.7) & 0.0 & 0.0 & 0.0 \\
        \texttt{ex4\_1\_1}$^a$ & -7.4873 & -15.8046 (52.62) & -7.5239 (0.48) & -7.4892 (0.02) & 0.0 & 0.03 & 0.13 \\
        \texttt{ex4\_1\_1}$^b$ & -7.4873 & -28.10E4 (99.99) & -16.47E4 (99.99) & -16.47E4 (99.99) & 0.00 & 0.21 & 0.30  \\
        \texttt{fo7} & 20.7298 & 16.7051 (24.09) & 18.7674 (10.46) & 20.5481 (0.88) & 6.44 & 10.18 & 14.85 \\
        \bottomrule
    \end{tabular}
    \caption{Relaxation objective values obtained on MINLPLib instances when the LP relaxation is applied to the univariate functions in the problem.}
    \label{tab:lp}
\end{table}

\paragraph{Relaxations for $\operatorname{erf}(x)$ and $\Gamma(x)$} This set of results is aimed at demonstrating the efficacy of the proposed relaxations for NLPs with the gamma function, $\Gamma(x)$, and the Gaussian error function, $\operatorname{erf}(x)$. Both these functions are only defined using integrals as follows:
\begin{flalign}
\Gamma(x) &= \int_0^{\infty} t^{x-1} \exp{(-t)} ~dt, \quad x > 0, \label{eq:gamma} \\
\operatorname{erf}(x) &= \frac{2}{\sqrt{\pi}} \int_0^x \exp{(-t^2)} ~ dt. \label{eq:erf}
\end{flalign}
Here, we demonstrate another facet of the proposed relaxations, namely the reliance only on oracles that provide values of the function and its derivative. In other words, we do not require closed-form expressions of a function or its derivative to formulate its relaxations. Both functions considered in this set of results lack closed form expressions. For the gamma function, even the derivative does not have a closed form expression. Nevertheless, there do exist oracles that can provide values of these functions and their derivatives up to an arbitrary tolerance. To demonstrate the effectiveness of the relaxations on NLPs with the above functions, we consider the following two optimization problems:
\begin{subequations}
\begin{flalign}
\texttt{erf}: &\quad \min\quad  \operatorname{erf}(x+y) + \sin x \cdot \exp{\left(\frac{-y}2\right)} \\ 
\text{subject to}: & \quad x^2 + y^2 \geqslant 10, 0\leqslant x, y \leqslant 10.
\end{flalign}
\label{eq:errf}
\end{subequations}
\begin{flalign}
\texttt{gamma}: & \quad \min \quad \Gamma(x) \quad \text{subject to: } 0.5 \leqslant x \leqslant 5. \label{eq:gammaf}
\end{flalign}
The globally optimal solutions for the NLPs in Eq. \eqref{eq:errf} and \eqref{eq:gammaf} are $0.0$ \cite{Mccormick1985} and $0.8856$ \cite{Gamma}, respectively. These optimization problems cannot be directly provided to state-of-the-art global optimization solvers like BARON or SCIP. In the NLP in Eq. \eqref{eq:errf}, we note that the objective contains a bilinear function. We utilize the transformation $w_1 = \sin x + \exp{\left(-y/2\right)}$, $w_2 = \sin x-\exp{\left(-y/2\right)}$ to transform $\sin x \cdot \exp{\left(-y/2\right)}$ to $(w_1^2 -w_2^2)/4$. This enables direct use of the formulation proposed in this paper to compute tight lower bounds. The strength of the MILP and the LP relaxations for these two NLPs are shown in Table \ref{tab:custom}. For this set of results, we do not report computation times since all runs were completed within a fraction of a second. In Table \ref{tab:custom}, MILP-LB and the LP-LB are values of the lower bound obtained by solving the piecewise polyhedral relaxations and the polyhedral relaxations, respectively, for the corresponding value of $\bm \epsilon$. 
\begin{table}[ht]
    \footnotesize
    \centering
    \begin{tabular}{cccccc}
        \toprule
        \multirow{2}{*}{instance} & \multirow{2}{*}{g-opt} & \multicolumn{2}{c}{MILP-LB} & \multicolumn{2}{c}{LP-LB} \\ 
        \cmidrule{3-4} \cmidrule{5-6}
        & & $\bm \epsilon = \infty$ & $\bm \epsilon = 0.001$ & $\bm \epsilon = \infty$ & $\bm \epsilon = 0.001$ \\
        \midrule
        \texttt{erf} & 0.0 & -0.5739 & 0.0 & -0.5739 & -0.3692 \\
        \texttt{gamma} & 0.8856 & -10.5610 & 0.8855 & -10.5610 & 0.8855\\
        \bottomrule
    \end{tabular}
    \caption{MILP and LP relaxation objective values obtained for the NLPs in Eqs. \eqref{eq:errf} and \eqref{eq:gammaf}.}
    \label{tab:custom}
\end{table}
As mentioned in the introduction, the relaxations for functions like the error function and the gamma function (shown to be very effective in Table \ref{tab:custom}) can directly be integrated into either sB\&B-based or MILP-based global optimization algorithms resulting in global solution techniques to MINLPs with a wider class of non-convex structures.

\section{Conclusion} \label{sec:conclusion}
We present a systematic way to construct a sequence of MILP and LP relaxations for the graph of a univariate function in $\mathcal C^1[\ell, u]$ i.e., bounded, continuous, and differentiable with domain $[\ell, u]$. The sequence of MILP and LP relaxations is proved to converge to the graph of the function and to its convex hull when the domain of the function is partitioned with an interval bisection scheme. To the best of our knowledge, this is the first work in the literature that shows such convergence results for both MILP and the LP relaxations in a single framework. Future work would focus on studying the efficacy of these relaxations when put to use in both sB\&B-based and  MILP-based global optimization algorithms for general non-convex optimization problems.

\section*{Acknowledgements}
The work was funded LANL’s Directed Research and Development (LDRD) projects, ``20170201ER: POD: A Polyhedral Outer-approximation, Dynamic-discretization optimization solver" and ``20200603ECR: Distributed Algorithms for Large-Scale Ordinary Differential/Partial Differential Equation (ODE/PDE) Constrained Optimization Problems on Graphs". This work was carried out under the U.S. DOE Contract No. DE-AC52-06NA25396.

\bibliographystyle{plain}
\bibliography{references}

\end{document}